\newtheorem{theorem}{Theorem}[section]
\newtheorem{defin}[theorem]{Definition}
\newtheorem{lemma}[theorem]{Lemma}
\newtheorem{prop}[theorem]{Proposition}
\DeclareMathOperator{\HF}{HF}
\DeclareMathOperator{\HS}{HS}
\DeclareMathOperator{\rad}{r}
\DeclareMathOperator{\rank}{rank}
\DeclareMathOperator{\Syz}{Syz}
\DeclareMathOperator{\vdim}{vdim}
\begin{document}
\title{The strong Lefschetz property of monomial complete intersections in two variables}
%Kom ihåg att ändra här!
\author{Lisa Nicklasson}
\date{}

\maketitle

\begin{abstract}
 In this paper we classify the monomial complete intersection algebras, in two variables, and of positive characteristic, which has the strong Lefschetz property. Together with known results, this gives a complete classification of the monomial complete intersections with the strong Lefschetz property.
\end{abstract}

\section{Background}

A graded algebra $A=\bigoplus_{i \ge 0} A_i$ is said to have the \emph{strong Lefschetz property} (SLP) if there is a linear form such that multiplication by any power of this linear form has maximal rank in every degree. Let $A$ be a monomial complete intersection, that is $A=K[x_1, \ldots, x_n]/(x_1^{d_1}, \ldots, x_n^{d_n})$, where $K$ is a field and $d_1, \ldots, d_n$ some positive integers. In characteristic zero, $A$ always has the SLP, which was first proved by Stanley in \cite{Stanley}. When the characteristic is positive, the algebra does not always have the SLP. A first result is that $A$ has the SLP when $p>\sum (d_i-1)$, where $p$ is the characteristic. This was proved in the case $n=2$ by Lindsey in \cite{Lindsey}, and later in the general case by Cook II in \cite{CookII}. 

A classification of all monomial complete intersections in three or more variables with the SLP is provided in \cite{Lundqvist-Nicklasson}. Notice that the problem is trivial when $n=1$, so the remaining case is $n=2$, which will be treated in this paper. The sufficient conditions in \cite{Lundqvist-Nicklasson} hold also in two variables, but it turns out that there is an additional class of algebras $K[x,y]/(x^a,y^b)$ with the SLP. This is indicated by Cook II in \cite{CookII}, where the two special cases, when $a=b$, and when the characteristic is two, is studied. Cook II solves these cases, under the assumption that the residue field $K$ is infinite. 

The main result of this paper is Theorem \ref{thm:slp_twovar_p>2}, which is a classification of the algebras $K[x,y]/(x^a,y^b)$ with the SLP, where $K$ is a field of characteristic $p \ge 3$. The classification is given in terms of the base $p$ digits of the integers $a$ and $b$. Together with the mentioned earlier results, this gives a complete classification of the monomial complete intersections with the SLP, see Theorem \ref{thm:slp_classification}. 

The technique used both in \cite{CookII} and in this paper, is the theory of the syzygy gap function, introduced by Monsky in \cite{Monsky}. The syzygy gap function deals with the degrees of the relations on $x^a, y^b$ and $(x+y)^c$. This can then be connected to the SLP using results of Brenner and Kaid in \cite{Brenner-Kaid-syzbundle} and \cite{Brenner-lookingout}. In \cite{Brenner-Kaid-syzbundle}, \cite{Brenner-lookingout}, and \cite{Monsky} the residue field is required to be algebraically closed. We will see in Section \ref{sec:syz-gap} that this assumption can be dropped. We will also give a new proof of the connection to the SLP, when working with monomial complete intersections.  

\section{The strong Lefschetz property}
 Let $A=\bigoplus_{i \ge 0} A_i$ be a graded algebra. A linear map $A_i \to A_{j}$ is said to have \emph{maximal rank} if it is injective or surjective. Each homogeneous element $f \in A_d$ induces a family of linear maps $A_i \to A_{i+d}$ by $a \mapsto f \cdot a$. Let such maps be denoted by $\cdot f: A_i \to A_{i+d}$. For short, we say that multiplication by $f$ has \emph{maximal rank in every degree}, if all the maps induced by $f$ have maximal rank.

\begin{defin} A graded algebra $A$ is said to have the \emph{strong Lefschetz property} (SLP) if there exists an $\ell \in A_1$ such that the maps $\cdot \ell^m: A_i \to A_{i+m}$ have maximal rank for all $i\ge 0$ and all $m\geq 1$. In this case, $\ell$ is said to be a \emph{strong Lefschetz element}.

We say that $A$ has the \emph{weak Lefschetz property} (WLP) if there exists an $\ell \in A_1$ such that the maps $\cdot \ell: A_i \to A_{i+1},$ have maximal rank for all $i\ge 0$. In this case, $\ell$ is said to be a \emph{weak Lefschetz element}.
\end{defin}

Let now $K$ be a field, and $A=K[x_1, \ldots, x_n]/I$, where $I$ is a monomial ideal. In \cite[Proposition 4.3]{Lundqvist-Nicklasson} it is proved that $A$ has the WLP if and only if $x_1+ \dots + x_n$ is a weak Lefschetz element. The corresponding is also true for the strong Lefschetz property. 

\begin{theorem}\label{thm:lef_element}
Let $R=K[x_1, \ldots, x_n]$, where $K$ is a field, and let $I \subset R$ be a monomial ideal. Then $R/I$ has the SLP (WLP) if and only if $x_1 + \ldots + x_n$ is a strong (weak) Lefschetz element.  
\end{theorem}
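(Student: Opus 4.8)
The plan is to prove both statements (SLP and WLP) simultaneously, though the WLP case is already handled in the cited reference, so the real content is the SLP direction. One direction is trivial: if $x_1 + \cdots + x_n$ is a strong Lefschetz element, then by definition $R/I$ has the SLP. So the substance is the converse: assuming some linear form $\ell = \sum c_i x_i$ is a strong Lefschetz element, I must show that the specific form $x_1 + \cdots + x_n$ also works.

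My first step would be to reduce to the case where $\ell$ has all nonzero coefficients. The key observation is that because $I$ is a \emph{monomial} ideal, the algebra $R/I$ carries a natural action of the torus $(K^*)^n$ by rescaling the variables $x_i \mapsto t_i x_i$. This action preserves $I$, hence descends to $R/I$, and it sends the multiplication map $\cdot \ell^m$ to the multiplication map $\cdot (\ell')^m$ where $\ell' = \sum t_i c_i x_i$. Since rescaling is a graded isomorphism of $R/I$, it preserves the rank of every such multiplication map. Therefore the \emph{set} of strong Lefschetz elements is stable under this torus action, up to the ranks being unchanged: if $\ell$ is a strong Lefschetz element then so is any $\sum t_i c_i x_i$ with $t_i \in K^*$. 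I expect this diagonal-rescaling argument to be the main engine of the proof.

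The next step is to argue that a strong Lefschetz element, if one exists, can be taken to have all coefficients nonzero, and then to normalize those coefficients to $1$. For the normalization, once I know some $\ell = \sum c_i x_i$ with all $c_i \neq 0$ is a strong Lefschetz element, I apply the torus element $t_i = c_i^{-1}$ to conclude that $\sum x_i = x_1 + \cdots + x_n$ is also a strong Lefschetz element, completing the proof. The delicate point is justifying that I may assume all $c_i \neq 0$: if $\ell$ has some vanishing coefficients it lies in a coordinate subspace, and I would need to show that having a strong Lefschetz element in such a subspace forces one with full support, or else derive the conclusion directly. I anticipate this is where I would lean on a semicontinuity or genericity argument — the property that a given multiplication map has maximal rank is an open condition on the coefficients of $\ell$ (the relevant maximal minor is a nonzero polynomial in the $c_i$), and the existence of \emph{some} strong Lefschetz element means these minor polynomials are not identically zero, so their common nonvanishing locus is a nonempty open set which must meet the full-support open set $\{c_i \neq 0 \ \forall i\}$.

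The main obstacle I foresee is handling a field $K$ that is finite or otherwise small: the genericity argument above produces a strong Lefschetz element with coefficients in an extension field or merely shows the nonvanishing locus is Zariski-dense, not that it contains a $K$-point. This is precisely the subtlety flagged in the excerpt, where Cook II's results assumed $K$ infinite. I would therefore structure the argument to avoid needing a generic $K$-point: instead of finding a full-support element directly, I would show that if \emph{any} $\ell$ is a strong Lefschetz element over $K$, then extending scalars to an algebraic closure $\bbar{K}$ preserves the ranks of all multiplication maps (rank is insensitive to field extension), apply the torus argument over $\bbar K$ to deduce that $x_1 + \cdots + x_n$ is a strong Lefschetz element for $(R/I) \otimes_K \bbar K$, and finally descend: since the ranks of the $\cdot(x_1+\cdots+x_n)^m$ maps are computed by the same integer matrices over $K$ and over $\bbar K$, maximal rank over $\bbar K$ implies maximal rank over $K$. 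This base-change-and-descend step is what lets the monomial structure do all the work while keeping the conclusion valid over an arbitrary field.
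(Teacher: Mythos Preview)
Your approach is correct and shares the two main ingredients with the paper --- the diagonal torus action preserving a monomial ideal, and the fact that rank is unchanged under field extension --- but you handle the zero-coefficient problem differently. You pass to $\bar K$, use Zariski openness of each maximal-rank condition to find a full-support Lefschetz element there, rescale, and descend. The paper instead grows the support of $\ell$ one variable at a time via a transcendental extension: having normalized to $\ell = \sum_{i\in\Lambda} x_i$, for $j \notin \Lambda$ it works over $K' = K(a)$ and observes that each maximal minor for multiplication by $(ax_j+\ell)^m$ is a polynomial in $a$ whose value at $a=0$ is the corresponding minor for $\ell^m$; since the latter does not vanish, neither does the former, so $ax_j + \ell$ is a strong Lefschetz element over $K'$. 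As $a$ is a unit in $K'$, rescaling gives $x_j + \ell$, whose coefficients lie in $K$, so one descends and iterates. The paper's specialization trick handles each pair $(i,m)$ on its own and never intersects open conditions, so it works verbatim even when $R/I$ is not Artinian and infinitely many rank conditions are in play; your density argument over $\bar K$ tacitly relies on there being only finitely many such conditions to intersect.
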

\begin{proof}
Suppose that $\sum_{i \in \Lambda}c_ix_i$, for some $\Lambda \subseteq \{1, \ldots, n\}$ and $0 \ne c_i \in K$, is a strong Lefschetz element of $A=R/I$. The monomial ideal $I$ is left unchanged under a change of variables $c_ix_i \mapsto x_i$. This shows that $\ell=\sum_{i \in \Lambda}x_i$ also is a strong Lefschetz element. If $\Lambda=\{1, \ldots, n\}$ we are done. Assume that $\Lambda\subset \{1, \ldots, n\}$, and $j \notin \Lambda$. The next step is to prove that $x_j+\ell$, is also a strong Lefschetz element. For this purpose we introduce a new element $a$ in an extension field of the type $K'=K(a) \supset K$. We will prove that $ax_j+\ell$ is a strong Lefschetz element in $A'=A\otimes_K K'$. Let, for each $i$, $B_i$ be the vector space basis for $A_i$ that consists of monic monomials. This is also a basis for $A'_i$, as a vector space over $K'$. Let $M$ be the matrix of the multiplication map 
\[
 \cdot (ax_j+\ell)^m: A'_i \to A'_{i+m},
\]
w. r. t. the bases $B_i$ and $B_{i+m}$. The entries of $M$ are polynomials in $a$. Let $M_0$ be the matrix we obtain by substituting $a=0$ in $M$. If $M$ does not have maximal rank, neither does $M_0$. But $M_0$ is the matrix of the map $ \cdot \ell^m: A_i \to A_{i+m}$, which has maximal rank. This shows that $M$ has maximal rank, and $ax_j+\ell$ is a strong Lefshetz element of $A'$. But then, since $a$ is a non-zero element of the field $K'$, so is $x_j+\ell$. The coefficients of $x_j+\ell$ are in $K$, so it is also a strong Lefschetz element of $A$. It follows that $x_1 + \dots + x_n$ is a strong Lefschetz element of $A$.    
\end{proof}

The \emph{Hilbert function} of a graded algebra $A=\bigoplus_{i \ge 0}A_i$ with residue field $K$ is a function $\HF_A:\mathbb{Z}_{\ge 0} \to \mathbb{Z}_{\ge 0}$ definied by $\HF_A(i)=\vdim_K A_i$, i. e. the vector space dimension of $A_i$ over $K$. The \emph{Hilbert series} of $A$, denoted $\HS_A$, is the generating function of the sequence $\HF(i)$, that is $\HS_A(t)=\sum_{i \ge 0} \HF(i)t^i$.

Let now $A$ be a monomial complete intersection, $A=K[x_1, \ldots, x_n]/(x_1^{d_1}, \ldots, x_n^{d_n})$, for some positive integers $d_1, \dots, d_n$. Let $t=\sum_{i=1}^n(d_i-1)$. This is the highest possible degree of a monomial in $A$, and hence $\HF_A(i)=0$ when $i>t$. It can also be seen that the Hilbert function is symmetric about $t/2$, and that $\HF_A(i) \le \HF_A(i+d)$ when $i \le (t-d)/2$. For a multiplication map to have maximal rank in every degree in $A$, it shall then be injective up to some degree $i$, and surjective for larger $i$. It can be proved that the injectiveness in this case implies the surjectiveness. 

\begin{prop}\label{prop:maxrang-inj} 
Let $A=K[x_1, \ldots, x_n]/(x_1^{d_1}, \ldots, x_n^{d_n})$ and $t=\sum_{i=1}^n(d_i-1)$, and let $f\in A$ be a form of degree $d$. The maps $\cdot f:A_i \to A_{i+d}$ all have maximal rank if and only if the maps with $i \le (t-d)/2$ are injective. 
% 
% It follows that $A$ has the SLP if and only if 
% \[
%  \cdot (x_1+ \dots +x_n)^m: A_i \to A_{i+m}
% \]
% is injective for each $m$, and each $i \le (t-m)/2$.

\end{prop}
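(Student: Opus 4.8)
The plan is to exploit the symmetry of the Hilbert function about $t/2$ together with the duality inherent in the complete intersection structure. The key structural facts already noted in the excerpt are that $\HF_A(i) = \HF_A(t-i)$ for all $i$, and that the Hilbert function is unimodal, increasing up to $t/2$ and decreasing afterwards. From these, for a fixed degree $d$, the map $\cdot f: A_i \to A_{i+d}$ goes between spaces whose dimensions satisfy $\HF_A(i) \le \HF_A(i+d)$ precisely when $i + (i+d) \le t$, i.e. when $i \le (t-d)/2$; in that range the target is at least as large as the source, so maximal rank means injectivity, whereas for $i > (t-d)/2$ the source is at least as large as the target and maximal rank means surjectivity.

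First I would observe that the forward direction is trivial: if all the maps $\cdot f$ have maximal rank, then in particular those with $i \le (t-d)/2$ do, and in that range maximal rank is the same as injectivity by the dimension count above. The substance is the converse. So assume every map $\cdot f: A_i \to A_{i+d}$ with $i \le (t-d)/2$ is injective; I must show the remaining maps (those with $i > (t-d)/2$) are surjective, and also confirm that the maps with $i \le (t-d)/2$ are indeed maximal rank, which follows at once from injectivity.

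The main step is to transfer injectivity in the low degrees to surjectivity in the high degrees using a perfect pairing. A monomial complete intersection is a Poincaré duality (Gorenstein) algebra: there is a nondegenerate pairing $A_i \times A_{t-i} \to A_t \cong K$ given by multiplication into the one-dimensional socle. The plan is to check that the multiplication map $\cdot f: A_j \to A_{j+d}$ is, up to this pairing, the transpose (adjoint) of the map $\cdot f: A_{t-j-d} \to A_{t-j}$. Concretely, for $u \in A_j$ and $v \in A_{t-j-d}$ one has $\langle f u, v\rangle = \langle u, f v\rangle$ since both equal the socle-component of $fuv$, so the two multiplication maps are adjoint with respect to the pairing. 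Consequently $\cdot f: A_j \to A_{j+d}$ is surjective if and only if its adjoint $\cdot f: A_{t-j-d} \to A_{t-j}$ is injective. Setting $i = t-j-d$, the condition $j > (t-d)/2$ on the high-degree map corresponds exactly to $i < (t-d)/2$ on the low-degree map, which is in the injective range by hypothesis. Hence every high-degree map is surjective.

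The main obstacle I anticipate is establishing the perfect-pairing / Gorenstein duality cleanly and verifying the adjointness identity, since this is the one place where the specific complete intersection structure (the socle being one-dimensional and concentrated in degree $t$, with the monomial $x_1^{d_1-1}\cdots x_n^{d_n-1}$ spanning it) is genuinely used rather than just the numerical shape of the Hilbert function. In particular I would make sure the pairing is nondegenerate in every complementary pair of degrees, which for a monomial complete intersection can be seen explicitly: the monomial $x^\alpha \in A_i$ pairs to a nonzero socle element only with $x^\beta \in A_{t-i}$ satisfying $\alpha + \beta = (d_1-1, \ldots, d_n-1)$, giving a bijection between the two monomial bases and hence nondegeneracy. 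Once nondegeneracy and the adjointness $\langle fu, v\rangle = \langle u, fv\rangle$ are in place, the rank statements (a map is surjective iff its adjoint is injective) are standard linear algebra and the conclusion is immediate.
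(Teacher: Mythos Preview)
Your argument is correct. The paper itself does not give a proof of this proposition but simply refers the reader to \cite[Proposition 2.6]{Lundqvist-Nicklasson}, so there is no in-paper proof to compare against. What you have written is the standard Gorenstein duality argument for artinian complete intersections: the perfect pairing $A_i \times A_{t-i} \to A_t \cong K$ makes $\cdot f$ self-adjoint, so injectivity of $\cdot f:A_i\to A_{i+d}$ is equivalent to surjectivity of $\cdot f:A_{t-i-d}\to A_{t-i}$, and the index correspondence $j\mapsto t-j-d$ swaps the ranges $i\le (t-d)/2$ and $j\ge (t-d)/2$. This is almost certainly the argument in the cited reference as well. One small point worth making explicit in a final write-up: for $j>t-d$ the target $A_{j+d}$ vanishes so surjectivity is automatic, which handles the tail where $t-j-d<0$ and the dual map degenerates.
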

\begin{proof}
 See e. g. \cite[Proposition 2.6]{Lundqvist-Nicklasson}.
\end{proof}
In other words, multiplication by a form $f$ has maximal rank in every degree if all homogeneous zero divisors of $f$ are of degree greater than $(t-d)/2$. Another interesting fact is that if we consider forms of the type $\ell^d$, and $t-d$ is even, then multiplicaton by $\ell^{d+1}$ has maximal rank in every degree if multiplication by $\ell^d$ does. This result will be important for the classification of algebras with the SLP when $n=2$. 

\begin{prop}\label{prop:every_other_lefschetzel}
 Let $A=K[x_1, \ldots, x_n]/(x_1^{d_1}, \ldots, x_n^{d_n})$ and $t=\sum_{i=1}^n(d_i-1)$. Let $\ell\in A$ be a linear form, and $d$ a positive integer such that $t-d$ is even. If the maps $\cdot \ell^d: A_{i} \to A_{i+d}$ have maximal rank for all $i\ge 0$, so does the maps $\cdot \ell^{d+1}: A_{i} \to A_{i+d+1}.$
\end{prop}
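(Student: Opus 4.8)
The plan is to reduce the statement to a question about injectivity using Proposition \ref{prop:maxrang-inj}, and then to exploit the factorization of multiplication by $\ell^{d+1}$ as multiplication by $\ell$ followed by multiplication by $\ell^d$. Set $m=(t-d)/2$, which is a non-negative integer precisely because $t-d$ is even. Applying Proposition \ref{prop:maxrang-inj} to the form $\ell^{d+1}$ of degree $d+1$, it suffices to show that $\cdot\ell^{d+1}\colon A_i \to A_{i+d+1}$ is injective for every $i \le (t-(d+1))/2 = m-\tfrac{1}{2}$, that is, for every integer $i \le m-1$. On the other hand, applying the same proposition to $\ell^d$ converts the hypothesis into the statement that $\cdot\ell^d\colon A_j \to A_{j+d}$ is injective for every $j \le m$.

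So fix an integer $i \le m-1$. I would write $\cdot\ell^{d+1}\colon A_i \to A_{i+d+1}$ as the composite $A_i \xrightarrow{\cdot\ell} A_{i+1} \xrightarrow{\cdot\ell^d} A_{i+1+d}$. The second factor is injective, since $i+1 \le m$. For the first factor, I would note that $\cdot\ell^d\colon A_i \to A_{i+d}$ itself factors as $A_i \xrightarrow{\cdot\ell} A_{i+1} \xrightarrow{\cdot\ell^{d-1}} A_{i+d}$; because $i \le m-1 < m$, this composite is injective, and injectivity of a composite forces its first factor $\cdot\ell\colon A_i \to A_{i+1}$ to be injective as well. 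A composite of injective maps is injective, so $\cdot\ell^{d+1}$ is injective on $A_i$ for every $i \le m-1$, and Proposition \ref{prop:maxrang-inj} then gives maximal rank in every degree.

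The step demanding the most care is the index bookkeeping, and in particular isolating where the parity hypothesis is actually used. The point is that injectivity of $\cdot\ell^d$ is guaranteed up to degree $m=(t-d)/2$, whereas for $\cdot\ell^{d+1}$ one only needs injectivity up to degree $m-1$; this one-step slack in the range is exactly what keeps the second factor $\cdot\ell^d\colon A_{i+1}\to A_{i+1+d}$ inside the injective range in the borderline case $i+1=m$. Were $t-d$ odd, the two ranges would be offset so that this borderline index falls just outside the range where injectivity of $\cdot\ell^d$ is available, and the factorization argument would collapse. Thus the evenness of $t-d$ is genuinely what makes the implication go through, and the remainder of the argument is purely formal.
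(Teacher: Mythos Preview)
Your proof is correct and follows essentially the same approach as the paper's: both reduce via Proposition~\ref{prop:maxrang-inj} to checking injectivity of $\cdot\ell^{d+1}$ for $i\le m-1$ where $m=(t-d)/2$, and both exploit that $\cdot\ell^d$ is injective up to degree $m$. The only difference is packaging---you phrase it as a composition of injective maps, while the paper phrases the same two cases as degree bounds on a hypothetical zero divisor---so the arguments are interchangeable.
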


\begin{proof}
Assume that $\cdot \ell^d: A_{i} \to A_{i+d}$ have maximal rank for all $i\ge 0$. By Proposition \ref{prop:maxrang-inj} all zero divisors of $\ell^d$ are of degree at least $(t-d)/2$. Suppose that there is a homogeneous element $f$ such that $\ell^{d+1}f=0$. By Proposition \ref{prop:maxrang-inj}, we are done if we can prove that $\deg(f)>(t-(d+1))/2=(t-d)/2-1/2$. Since $t-d$ is even, the right hand side is not an integer, and it is enough to prove $\deg(f)>(t-d)/2-1$. 
 Consider first the case when $\ell^df=0$. That is, $f$ is a zero divisor of $\ell^d$, and it follows that  $\deg(f)>(t-d)/2$. Consider instead the case when $\ell^d f \ne 0$. We know that $\ell^{d+1}f=0$, that is $\ell f$ is a homogeneous zero divisor of $\ell^d$. Then $\deg(\ell f)>(t-d)/2$, and $\deg(f)>(t-d)/2-1$, which finishes the proof.
\end{proof}

\begin{prop}\label{prop:slp-maxrang}
The algebra $A=K[x,y]/(x^a,y^b)$ has the SLP if and only if the maps 
\[
 \cdot (x+y)^{a+b-2c}:A_i \to A_{i+a+b-2c}
\]
have maximal rank for all $i \ge 0$ and $1 \le c < \min(a,b)$.
\end{prop}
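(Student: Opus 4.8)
The plan is to combine Theorem \ref{thm:lef_element}, Proposition \ref{prop:maxrang-inj}, and Proposition \ref{prop:every_other_lefschetzel} to reduce the SLP condition to exactly the family of maps listed in the statement. By Theorem \ref{thm:lef_element}, the algebra $A=K[x,y]/(x^a,y^b)$ has the SLP if and only if $\ell=x+y$ is a strong Lefschetz element, that is, if and only if the maps $\cdot(x+y)^m:A_i\to A_{i+m}$ have maximal rank for all $i\ge 0$ and all $m\ge 1$. So the task is to show that checking maximal rank for the specific exponents $m=a+b-2c$ with $1\le c<\min(a,b)$ is both necessary and sufficient.

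First I would record that here $n=2$, $d_1=a$, $d_2=b$, so $t=(a-1)+(b-1)=a+b-2$. For a fixed power $m$, Proposition \ref{prop:maxrang-inj} tells us that $\cdot\ell^m$ has maximal rank in every degree precisely when the maps with $i\le(t-m)/2$ are injective, which is an interesting condition only when $t-m\ge 0$, i.e. $m\le a+b-2$. For larger $m$ every relevant source degree forces surjectivity automatically, so those powers impose no real constraint and can be discarded. Thus the SLP reduces to maximal rank of $\cdot\ell^m$ for $1\le m\le a+b-2$. Writing $m=a+b-2c$ we see the even-gap powers $t-m=2c-2$ correspond exactly to integers $c$, and I would identify the admissible range $1\le c<\min(a,b)$ as the one for which the map is genuinely nontrivial (outside this range either the source or target vanishes, or the map is an isomorphism for dimension reasons).

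The key reduction is to eliminate the powers $m$ of the opposite parity to $a+b-2c$. For these, note that $t-m=a+b-2-m$ is even exactly when $m$ has the same parity as $a+b$, i.e. when $m=a+b-2c$ for integer $c$; the remaining powers $m=a+b-2c-1$ have $t-m$ odd. Here I would invoke Proposition \ref{prop:every_other_lefschetzel}: if $\cdot\ell^{d}$ has maximal rank in every degree and $t-d$ is even, then so does $\cdot\ell^{d+1}$. Applying this with $d=a+b-2c$ shows that maximal rank of the even-gap powers automatically propagates to the odd-gap power immediately above, so it suffices to verify maximal rank only for the powers $m=a+b-2c$. This is precisely the reduction claimed. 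The main obstacle, and the point needing the most care, is the bookkeeping at the endpoints: I must check that the powers with $t-m$ even but $c$ outside the stated range $1\le c<\min(a,b)$ impose no condition (the map being trivially injective or surjective from the symmetry and vanishing of the Hilbert function), and that the single largest even-gap power to which Proposition \ref{prop:every_other_lefschetzel} must be applied is itself included in, or covered by, the listed family, so that no required power is left unaddressed.
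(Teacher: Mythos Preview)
Your overall architecture matches the paper's: the ``only if'' direction is Theorem~\ref{thm:lef_element}, the large powers $m\ge a+b-1$ are disposed of because the target vanishes, and Proposition~\ref{prop:every_other_lefschetzel} (with $t-d=2c-2$ even) lets you pass from the exponents $a+b-2c$ to $a+b-2c+1$. That part is fine.

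The gap is at the other endpoint. You assert that for $c\ge\min(a,b)$ (equivalently, for the small powers $1\le m\le |b-a|$) the maps $\cdot(x+y)^m$ have maximal rank automatically, because ``the source or target vanishes, or the map is an isomorphism for dimension reasons,'' and you later downgrade this to ``bookkeeping'' involving ``symmetry and vanishing of the Hilbert function.'' That is not true. Take $a\le b$ and $m=b-a$ (i.e.\ $c=a$). By Proposition~\ref{prop:maxrang-inj} you need injectivity for $i\le c-1=a-1$; in particular at $i=a-1$, where $\HF_A(a-1)=\HF_A(b-1)=a$ are both nonzero and equal, so neither vanishing nor a dimension inequality forces the map to be injective. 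Nor can you bootstrap from the already-verified case $c=a-1$: injectivity of $\cdot\ell^{b-a+2}$ is only guaranteed for $i\le a-2$, and while that trivially yields injectivity of $\cdot\ell^{b-a}$ on the same range, it misses exactly the new degree $i=a-1$ (and the deficit grows as $c$ increases).

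The paper handles this range by a short but substantive argument that you are missing: lift a hypothetical nonzero zero divisor $f$ of $(x+y)^{a+b-2c}$ to $F\in K[x,y]$, write $(x+y)^{a+b-2c}F=gx^a+hy^b$, observe $h\ne 0$ (else $x^a\mid F$ and $f=0$), deduce $\deg F\ge 2c-a$, and conclude $\deg f\ge c>c-1$ whenever $c\ge a$. This uses that $x+y$ is coprime to both $x$ and $y$, not merely Hilbert-function information, and it is the one place in the proof with genuine algebraic content beyond the three cited results.
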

\begin{proof}
The ''only if''-part follows from Theorem \ref{thm:lef_element}. 

The numbers $t$ and $d$ in Proposition \ref{prop:every_other_lefschetzel} are here $t=a+b-2$, and $d=a+b-2c$. We see that $t-d=2c-2$ is even, so if multiplication by $(x+y)^{a+b-2c}$ has maximal rank in every degree, so does multiplication by $(x+y)^{a+b-2c+1}$. If $c\le0$ then $A_{i+a+b-2c}=\{0\}$, and obviously any map $A_i \to A_{i+a+b-2c}$ is surjective. This is why we only need to consider $c \ge 1$.
 Without loss of generality, we can assume that $a=\min(a,b)$. To complete the proof we need to show that multiplication by $(x+y)^{a+b-2c}$ has maximal rank in every degree when $c \ge a$. Suppose there is a non-zero homogenous $f \in A$ such that $(x+y)^{a+b-2c}f=0$. By Proposition \ref{prop:maxrang-inj} multiplication by $(x+y)^{a+b-2c}$ has maximal rank in every degree if we can prove that
 \[
  \deg(f)>\frac{a+b-2-(a+b-2c)}{2}=c-1.
 \]
 Let $F$ be a homogeneous element in $K[x,y]$ whose image in $A$ is $f$. Then
 \[
  (x+y)^{a+b-2c}F = gx^a+hy^b, ~~ \textrm{for some} ~ g, h \in K[x,y].
 \]
We can not have $h=0$, because that would imply that $F$ is divisible by $x^a$, and $f=0$ in $A$. Hence $h \ne 0$ and   $\deg((x+y)^{a+b-2c}F) \ge b$, which is equivalent to $\deg(F) \ge 2c-a.$ If $c \ge a$ this implies $\deg(f)=\deg(F) \ge c$, and we are done.
\end{proof}

\section{Classifying the monomial complete intersections with the strong Lefschetz property}
A classification of the monomial complete intersections with the SLP, in three or more variables, is given in \cite[Theorem 3.8]{Lundqvist-Nicklasson}. Here we give a slightly reformulated version of the theorem, to make the notation similar to that used later in the case of two variables. We will prove that the formulation here is equivalent to that in \cite{Lundqvist-Nicklasson}. 

\begin{theorem}\label{thm:slp_n>2}
Let $A=K[x_1, \ldots, x_n]/(x_1^{d_1}, \ldots, x_n^{d_n})$ where $n \ge 3$, $d_i \ge 2$ for all $i$, and $K$ is a field of characteristic $p>0$. Let $t = \sum_{i = 1}^n (d_i-1)$ and
let $d_1=\max(d_1, \dots, d_n)$. Write $d_1=N_1p+r_1$ with $0\le r_1< p$.
Then $A$ has the SLP if and only if one of the following two conditions hold
\begin{enumerate}
\item $t <p$,
\item $d_1 \ge p$, $d_i < p$ for $i =2, \ldots, n$ and $\sum_{i=2}^n(d_i-1) \le \min(r_1,p-r_1)$.
\end{enumerate} 
\end{theorem}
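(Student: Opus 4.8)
Since the classification of which monomial complete intersections in $n \ge 3$ variables enjoy the SLP is already established as \cite[Theorem 3.8]{Lundqvist-Nicklasson}, the content of the present statement is purely arithmetic: I must show that conditions (1) and (2) above cut out exactly the same set of tuples $(d_1, \dots, d_n)$ as the conditions appearing in the original theorem. The plan is therefore to recall the precise inequalities from \cite[Theorem 3.8]{Lundqvist-Nicklasson} and then to verify, by a direct case analysis on the size of $d_1 = \max_i d_i$ relative to $p$, that the two descriptions coincide.

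First I would record two structural observations that organize the case split. If $t < p$, then $d_1 - 1 \le t < p$ forces $d_1 < p$, so that $N_1 = 0$ and $r_1 = d_1$; hence condition (1) can hold only in the regime $d_1 < p$. Conversely, if $d_1 \ge p$ then, since $n \ge 3$ and every $d_i \ge 2$, we get $t = \sum_{i=1}^n (d_i - 1) \ge (p-1) + 1 + 1 > p$, so condition (1) fails automatically. Thus conditions (1) and (2) are mutually exclusive, and the SLP region splits cleanly according to whether $d_1 < p$ or $d_1 \ge p$.

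In the regime $d_1 < p$ every $d_i$ is strictly less than $p$, condition (2) is vacuous because it demands $d_1 \ge p$, and so the reformulation asserts SLP exactly when $t < p$; I would check that the original theorem reduces to the same inequality in this range. In the regime $d_1 \ge p$ condition (1) is unavailable, so the reformulation asserts SLP exactly when $d_i < p$ for $i = 2, \dots, n$ and $\sum_{i=2}^n (d_i - 1) \le \min(r_1, p - r_1)$. Here the matching with the original comes down to recognizing that the relevant quantity depends on $d_1$ only through its least significant base-$p$ digit $r_1$ and that it enters symmetrically under $r_1 \mapsto p - r_1$; substituting $d_1 = N_1 p + r_1$ into the original bound and comparing should reproduce exactly the stated inequality.

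The main obstacle I anticipate is this last reconciliation: extracting the symmetric term $\min(r_1, p - r_1)$ from whatever form the bound takes in \cite[Theorem 3.8]{Lundqvist-Nicklasson}, and confirming that the side conditions (namely $d_i < p$ for $i \ge 2$, and the exclusion of any second index with $d_i \ge p$) agree on both sides. This is elementary once the decomposition $d_1 = N_1 p + r_1$ is in place, but it requires careful bookkeeping of the boundary cases; for instance one should observe that $\sum_{i=2}^n (d_i - 1) \ge n - 1 \ge 2$ always holds, so that $r_1 = 0$ (equivalently $p \mid d_1$) forces the inequality in condition (2) to fail, and more generally SLP in this regime requires $2 \le r_1 \le p - 2$.
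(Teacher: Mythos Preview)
Your plan is correct and matches the paper's approach: both treat the theorem as a reformulation of \cite[Theorem~3.8]{Lundqvist-Nicklasson} and reduce the proof to the arithmetic verification that the two sets of conditions coincide, using in particular $\sum_{i=2}^n(d_i-1)\ge n-1\ge 2$ to dispose of the boundary case $r_1=0$ (equivalently $d_1=p$). The only difference is that the paper spells out precisely what changes in the original formulation---the convention $0<r_1\le p$ in place of $0\le r_1<p$, together with $d_1>p$ and $d_i\le p$ in place of $d_1\ge p$ and $d_i<p$---so your anticipated obstacle of ``extracting $\min(r_1,p-r_1)$'' does not actually arise, as the original bound is already written in that symmetric form.
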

\begin{proof}
 The difference, compared to \cite[Theorem 3.8]{Lundqvist-Nicklasson}, is that in \cite{Lundqvist-Nicklasson} the bound for $r_1$ is $0< r_1 \le p$, and the second condition is 
 \[
   d_1 > p,~ d_i \le p~ \mbox{for} ~ i =2, \ldots, n~ \mbox{and} ~ \sum_{i=2}^n(d_i-1) \le \min(r_1,p-r_1).
 \]
It is easy to see that both definitions of $r_1$ gives the same value $\min(r_1,p-r_1)$. When $d_1=p$ condition 2 of \cite[Theorem 3.8]{Lundqvist-Nicklasson} is not satisfied. Neither is condition 2 in Theorem \ref{thm:slp_n>2}, because $\min(r_1,p-r_1)=0$, and $\sum_{i=2}^n(d_i-1) \ge n-1 \ge 2$. When $d_i=p$, for some $i>1$, condition 2 in Theorem \ref{thm:slp_n>2} is not satisfied. Neither is 2 in \cite[Theorem 3.8]{Lundqvist-Nicklasson}, because then $\sum_{i=2}^n(d_i-1) \ge p$, and $\min(r_1,p-r_1)<p$ in general. This shows that both formulations agree. 
\end{proof}

The two conditions in Theorem \ref{thm:slp_n>2} above can be genaralized to the case $n=2$. Next we will prove that in two variables, and characteristic $p>2$, the algebra $A$ has the SLP in these two cases, but also in an additional one. 

\begin{theorem}\label{thm:slp_twovar_p>2}
 Let $A=K[x,y]/(x^a,y^b)$, where $a,b \ge 2$ and $K$ is a field of characteristic $p>2$. Write $a$ and $b$ in base $p$, that is $a=a_kp^k + \dots + a_1p+a_0$ and $b=b_\ell p^\ell + \dots + b_1p+b_0$, where $0 \le a_i,b_i <p$, and $a_k,b_\ell \ne 0$. We may assume that $ \ell \ge k$. The classification of the algebras with the SLP is divided into three cases.
 \begin{enumerate}
  \item When $a,b<p$, $A$ has the SLP if and only if $a+b \le p+1$. 
 
 \item When $a<p$ and $b \ge p$, $A$ has the SLP if and only if $a \le \min(b_0,p-b_0) +1$. 
 
 \item When $a,b \ge p$, $A$ has the SLP if and only if the following three conditions are satisfied. 
 \begin{enumerate}[(a)]
 \item $a_0= \frac{p\pm 1}{2}$, and $b_0=\frac{p\pm 1}{2}$, 
 \item $a_i=b_i=\frac{p-1}{2}$ for $i=1, 2, \ldots, k-1$,
 \item $a_k+b_k \le p-1$, and $b_k \ge a_k$ when $\ell >k$.
\end{enumerate}\end{enumerate}
\end{theorem}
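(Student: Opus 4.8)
The plan is to reduce the strong Lefschetz property to the vanishing of a family of syzygy gaps, and then to evaluate those gaps through Monsky's recursion on the base-$p$ digits of $a$ and $b$. First I would set up the reduction. By Proposition \ref{prop:slp-maxrang}, $A$ has the SLP if and only if $\cdot(x+y)^{C}$ has maximal rank in every degree for each $C = a+b-2c$ with $1 \le c < \min(a,b)$; by Proposition \ref{prop:maxrang-inj} this amounts to the absence of a homogeneous zero divisor of $(x+y)^{C}$ of degree at most $c-1$. To control these zero divisors I would use the syzygy gap $\delta(a,b,C)$ of the triple $x^a, y^b, (x+y)^{C}$ recorded in Section \ref{sec:syz-gap}, following \cite{Monsky}. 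The Hilbert--Burch description shows that the colon ideal $(x^a,y^b):(x+y)^{C}$ is generated by the two third-components $w_1, w_2$ of the syzygies, with $\deg w_1 = c - \delta(a,b,C)/2$; since this degree is below $\min(a,b)$ the generator $w_1$ is nonzero in $A$, so the minimal degree of a zero divisor of $(x+y)^{C}$ is exactly $c - \delta(a,b,C)/2$. As $a+b+C$ is even the gap is even, and the required injectivity up to degree $c-1$ holds if and only if $c - \delta(a,b,C)/2 > c-1$, i.e. if and only if $\delta(a,b,C)=0$. Thus the theorem becomes the statement that $\delta(a,b,a+b-2c)=0$ for every $1 \le c < \min(a,b)$.

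With this reformulation the first two cases are short. When $a,b<p$ and $a+b \le p+1$ all three arguments $a, b, a+b-2c$ are $<p$, the relevant binomial coefficients are units, and the triple satisfies the strict triangle inequality with even sum, so $\delta=0$ and the SLP holds; if instead $a+b \ge p+2$ then taking $c=1$ gives $C=a+b-2 \ge p$, and using $(x+y)^p = x^p+y^p$ one exhibits the degree-$(c-\delta/2)$ zero divisor directly, forcing $\delta(a,b,a+b-2)>0$ and breaking the SLP. Case 2 ($a<p\le b$) is the two-variable analogue of Theorem \ref{thm:slp_n>2}(2): here $\min(a,b)=a$, and I would show that the vanishing of $\delta(a,b,a+b-2c)$ over $1 \le c < a$ reduces to a condition on the single lowest digit $b_0$, namely $a-1 \le \min(b_0, p-b_0)$, by running the digit recursion once and observing that, since $a$ is small, only the units digit of $b$ can create a syzygy in the allowed degree range.

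The heart of the argument is Case 3, where $a,b\ge p$. Here I would compute $\delta(a,b,a+b-2c)$ for all $c$ simultaneously using the scaling law $\delta(pa',pb',pc')=p\,\delta(a',b',c')$ together with Monsky's descent, which peels off the units digits $a_0, b_0$ and the corresponding digit of $C=a+b-2c$, divides by $p$, and recurses. The digit conditions (a)--(c) are exactly the requirement that each descent step land on a balanced (triangle-inequality, even-sum) triple and stay there: condition (a) forces the lowest level to be centered at $\tfrac{p\pm1}{2}$, conditions (b) keep every intermediate level at the central digit $\tfrac{p-1}{2}$ so that no gap opens during the descent, and condition (c) governs the top level, where the possibly unequal digit-lengths of $a$ and $b$ (encoded by $\ell \ge k$) produce the asymmetric constraint $b_k \ge a_k$.

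The main obstacle, I expect, is precisely this simultaneous control over all $c$ in Case 3: a single gap $\delta(a,b,C)$ is routine, but one must verify that $\delta$ vanishes across the entire sweep $C = a+b-2,\, a+b-4,\, \dots,\, |a-b|+2$, since the base-$p$ digits of $C$ change with $c$ and a failure at even one value of $c$ destroys the SLP. Making the recursion uniform in $c$, and correctly tracking the carries produced when $a+b-2c$ is reduced modulo $p$ at each level --- together with the leading-digit boundary behaviour when $\ell>k$ --- is where the delicate bookkeeping lies.
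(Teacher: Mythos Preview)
Your reduction of the SLP to the vanishing of $\delta(a,b,a+b-2c)$ for all $1 \le c < \min(a,b)$ is correct and is exactly what the paper establishes (Theorem~\ref{thm:slp_delta=0}, via Proposition~\ref{prop:maxrang-delta}); your computation of the minimal zero-divisor degree as $c-\delta/2$ is the content of the proof of Proposition~\ref{prop:maxrang-delta}. Your treatments of Cases~1 and~2 are terse but essentially sound.

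Where your proposal diverges from the paper is Case~3, and here there is a genuine gap. You propose to compute $\delta(a,b,a+b-2c)$ via a ``Monsky descent'' that ``peels off the units digits \ldots\ divides by $p$, and recurses,'' but you never state what this recursion is. There is no clean formula of the shape $\delta(a,b,C) = (\text{something in the lowest digits}) + p\cdot\delta(a',b',C')$ in general; the $\pm 1$ step lemma (Lemma~\ref{lemma:delta_plusminus1}) and the scaling law (Lemma~\ref{lemma:delta_p^s}) alone do not produce one, and the carries you mention change which branch applies in a way that depends on $c$. As written, your Case~3 argument is a hope rather than a proof, and the obstacle you yourself identify---uniform control over all $c$---is left unresolved.

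The paper handles exactly this obstacle, but by a different mechanism. Rather than recursing on digits, it first passes through the Manhattan-distance criterion (Theorem~\ref{thm:slp_manhattan}, equivalently Theorem~\ref{thm:delta=0_kriterium}): $\delta(a,b,C)=0$ iff $|a-up^i|+|b-vp^i|+|C-wp^i|\ge p^i$ for all $i$ and all odd $u+v+w$. The crucial step is Proposition~\ref{prop:slp_step}: writing $a=m_ip^i+r_i$ and $b=n_ip^i+s_i$, only $u\in\{m_i,m_i+1\}$ and $v\in\{n_i,n_i+1\}$ can matter, and in each of the four resulting cases one minimizes $|a+b-2c-wp^i|$ over the admissible $c$ and $w$ explicitly. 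This eliminates $c$ entirely and produces four inequalities in $r_i,s_i$ alone. The digit conditions (a)--(c) then drop out by reading off these inequalities level by level, with the asymmetry at $i=k+1$ (condition~(c) when $\ell>k$) arising because $m_{k+1}=0$ kills two of the four conditions. That intermediate proposition is the missing idea in your plan.
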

Notice that there are no restrictions on $b_i$ for $i>k$, in the case $\ell >k$. The theorem will be proved later in this section. 

In \cite[Theorem 4.9]{CookII} Cook II proves the special case $a=b$ of Theorem \ref{thm:slp_twovar_p>2}. Cook II also proves the characteristic two case.

\begin{theorem}[{\cite[Corollary 4.8]{CookII}}]\label{thm:slp_twovar_p=2}
 Let $A=K[x,y]/(x^a,y^b)$, where $ 2 \le a \le b $ and $K$ is a field of characteristic two. $A$ has the SLP if and only if one of the two following conditions hold.
 \begin{enumerate}
  \item $a=2$ and $b$ is odd,
  \item $a=3$ and $b \equiv 2 \mod 4$.
 \end{enumerate}
\end{theorem}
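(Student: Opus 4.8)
The plan is to turn the SLP into a finite list of determinant conditions modulo $2$ and then read off which $(a,b)$ survive. By Proposition \ref{prop:slp-maxrang}, $A=K[x,y]/(x^a,y^b)$ (with $a\le b$) has the SLP if and only if, for every $c$ with $1\le c<a$, multiplication by $(x+y)^{a+b-2c}$ has maximal rank in every degree; by Proposition \ref{prop:maxrang-inj} this means injectivity of $\cdot(x+y)^{a+b-2c}\colon A_i\to A_{i+a+b-2c}$ for all $i\le c-1$. Since the Hilbert function is symmetric about $t/2=(a+b-2)/2$, at the top relevant degree $i=c-1$ the source $A_{c-1}$ and target $A_{a+b-c-1}=A_{t-(c-1)}$ have the same dimension $c$, so the decisive condition is that this square map be invertible. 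In the monomial bases $\{x^jy^{c-1-j}\}$ and $\{x^{s}y^{a+b-c-1-s}\}$ the matrix of this map is the Toeplitz matrix
\[
 T^{(c)}_{s,j}=\binom{a+b-2c}{(a-c)+s-j},\qquad 0\le s,j\le c-1,
\]
(entries outside $[0,a+b-2c]$ read as $0$), and the SLP amounts to $\det T^{(c)}\not\equiv 0\pmod 2$ for every $1\le c<a$, the lower-degree maps being governed by the same binomial coefficients. The whole argument thus reduces to evaluating these binomial determinants in characteristic two, which I would do via Lucas' theorem: $\binom{n}{k}$ is odd exactly when the binary support of $k$ lies in that of $n$, equivalently $k\wedge(n-k)=0$.

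I would first dispose of the small cases. For $a=2$ only $c=1$ occurs and $T^{(1)}=\bigl(\binom{b}{1}\bigr)$, so the SLP holds iff $b$ is odd, giving case (1). For $a=3$ there are two conditions. The case $c=1$ gives $T^{(1)}=\bigl(\binom{b+1}{2}\bigr)$, odd iff $b\equiv 1,2\pmod 4$. The case $c=2$ gives
\[
 \det T^{(2)}=\det\begin{pmatrix}\binom{b-1}{1}&1\\[2pt]\binom{b-1}{2}&\binom{b-1}{1}\end{pmatrix}\equiv\binom{b-1}{1}+\binom{b-1}{2}=\binom{b}{2}\pmod 2,
\]
odd iff $b\equiv 2,3\pmod 4$ (here I use $x^2\equiv x$ in $\mathbb{F}_2$ and Pascal's rule). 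Both hold simultaneously iff $b\equiv 2\pmod 4$, which is case (2).

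The remaining and genuinely hard direction is to show that no $b\ge a$ works once $a\ge 4$. The case $c=1$ already forces a strong restriction: $T^{(1)}=\bigl(\binom{a+b-2}{a-1}\bigr)$ is odd iff $(a-1)\wedge(b-1)=0$, i.e.\ the binary expansions of $a-1$ and $b-1$ have disjoint support. The plan is to show that under this support‑disjointness some further $c$ makes $\det T^{(c)}\equiv 0$. For $a=4,5$ one checks directly that $c=1$ forces $b\equiv 1\pmod 4$ (resp.\ bit $2$ of $b-1$ to vanish) and then $\det T^{(2)}\equiv\binom{a+b-4}{a-2}+\binom{a+b-4}{a-3}\binom{a+b-4}{a-1}\equiv 0\pmod 2$; but for larger $a$ the failing $c$ moves (e.g.\ for $a=6,\,b=11$ the maps $c=1,2,3$ all succeed while $c=4$ fails, the bottom Toeplitz row $\binom{9}{a-1},\dots,\binom{9}{a-c}$ being entirely even).

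Producing the failing $c$ uniformly for all $a\ge 4$ is the main obstacle, and I expect the clean way to overcome it is Monsky's syzygy gap function $\delta_2(a,b,c)$ \cite{Monsky} for $x^a,y^b,(x+y)^c$, together with the Brenner–Kaid dictionary relating a positive syzygy gap to failure of maximal rank. In characteristic two this function is self‑similar under $(a,b,c)\mapsto 2(a,b,c)$ and controlled by the binary digits of $a,b,c$; the strategy is to use this recursion to exhibit, for each $a\ge 4$ and each admissible $b$, one exponent $a+b-2c$ at which the gap is positive, i.e.\ $\det T^{(c)}\equiv 0\pmod 2$. Combining this with the explicit computations for $a=2,3$ yields exactly the two families in the statement; this is the route taken by Cook II \cite{CookII}, and the reduction above via Propositions \ref{prop:slp-maxrang} and \ref{prop:maxrang-inj} is what makes the syzygy‑gap input directly applicable.
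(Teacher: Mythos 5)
Your reduction to the square Toeplitz determinants $\det T^{(c)}$, $1\le c<a$, is sound: Propositions \ref{prop:slp-maxrang} and \ref{prop:maxrang-inj} reduce the SLP to injectivity in degrees $i\le c-1$, and the passage from the single square matrix at $i=c-1$ down to lower degrees is legitimate (if $0\ne f\in A_i$ with $i<c-1$, then $x^{c-1-i}f\ne 0$ in $A_{c-1}$ since all exponents stay below $a$ and $b$, so injectivity at $i=c-1$ propagates downward — you should say this rather than wave at ``the same binomial coefficients''). Your Lucas-theorem computations for $a=2$ and $a=3$ are correct and yield exactly the two stated families. But the proof has a genuine gap at the decisive point: the claim that \emph{no} $b\ge a$ works once $a\ge 4$. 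You verify only $a=4,5$ and the sporadic pair $(a,b)=(6,11)$, observe that the failing $c$ moves around, and then state that producing it uniformly ``is the main obstacle,'' which you ``expect'' to overcome by Monsky's syzygy-gap recursion, ``the route taken by Cook II.'' Since the statement being proved \emph{is} Cook II's Corollary 4.8, deferring to \cite{CookII} at exactly this step is circular, and no digit recursion is actually carried out; as written, the only-if direction for $a\ge 4$ is an announced plan, not a proof. (For calibration: the paper itself does not reprove this theorem either — it imports it by citation — but it supplies the tools that close your gap.)

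Concretely, the missing step follows from the paper's Proposition \ref{prop:slp_step}, which is valid for all $p>0$ (it rests on Theorem \ref{thm:slp_manhattan}, proved in Section \ref{sec:syz-gap} over an arbitrary field) and is exactly the uniform digit criterion you were hoping to extract from \cite{Monsky}. Take $p=2$ and write $a=m_i2^i+r_i$, $b=n_i2^i+s_i$ with $0\le r_i,s_i<2^i$. If $8\le a\le b$, conditions 1--4 at $i=2$ force $|r_2-s_2|\le 1$ and $3\le r_2+s_2\le 5$, and condition 3 at $i=1$ then restricts to $(r_2,s_2)\in\{(1,2),(2,1),(2,3),(3,2)\}$; at $i=3$ one needs $|r_3-s_3|\le 1$ and $7\le r_3+s_3\le 9$ with $r_3\in\{r_2,r_2+4\}$, $s_3\in\{s_2,s_2+4\}$, and every choice violates the sum or the difference bound, so no $a\ge 8$ survives. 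The remaining cases $a\in\{4,5,6,7\}$ die quickly: $a=4$ has $r_2=0$, contradicting $r_2+s_2\ge 3$ together with $r_2\ge s_2-1$; for $a=5,6,7$ the constraints at $i\le 2$ pin down $s_2$, and then condition 4 at $i=3$ (namely $r_3+s_3\le 9$, with $r_3=a$ since $m_3=0$) fails for every admissible $b$. The same bookkeeping for $a=2,3$ reproduces ``$b$ odd'' and ``$b\equiv 2 \bmod 4$,'' consistent with your determinant calculations. So your approach is a correct and even attractive beginning, but to be a proof it must replace the appeal to Cook II with this (or an equivalent) uniform argument.
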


Theorem \ref{thm:slp_n>2}, Theorem \ref{thm:slp_twovar_p>2}, and Theorem \ref{thm:slp_twovar_p=2} can now be combined into a complete classification of the monomial complete intersections with the SLP. 

\begin{theorem}\label{thm:slp_classification}
 Let $A=K[x_1, \ldots, x_n]/(x_1^{d_1}, \ldots, x_n^{d_n})$, where all $d_i \ge 2$ and $K$ is a field of characteristic $p>0$. Write each $d_i$ in base $p$ as $d_i=c_{ik_i}p^{k_i} + \dots + c_{i1}p+c_{i0}$, with $c_{ik_i} \ne 0$. The algebra $A$ has the SLP if and only if one of the following conditions hold.
 \begin{enumerate}
  \item $n=1$,
  \item $n=2$, $p=2$, and one of the following holds, for $d_1 \le d_2$
  \begin{itemize}
   \item $d_1=2$ and $c_{20}=1$,
   \item $d_1=3, c_{21}=1$, and $c_{20}=0$,
  \end{itemize}
  \item $n=2$, $p>2$ and all the following conditions are satisfied, for $k_1 \le k_2$
  \begin{itemize}
   \item $c_{10}=\frac{p \pm 1}{2}, c_{20}=\frac{p \pm 1}{2}$,
   \item $c_{1j}=c_{2j}=\frac{p-1}{2}$, for $j=1, \ldots, k_1-1$,
   \item $c_{1k_1}+c_{2k_1} < p$, and $c_{2k_1} \ge c_{1k_1}$ if $k_1<k_2$,
  \end{itemize}
  \item $n \ge 2$, and $\sum_{i=1}^n (d_i-1) <p$,
  \item $n \ge 2$, and there is a $j$ such that ${d_j \ge p}$, ${d_i<p}$ for all ${i \ne j}$, and ${\sum_{i \ne j}(d_i-1) \le \min(c_{j0},p-c_{j0})}$.
  \end{enumerate}
\end{theorem}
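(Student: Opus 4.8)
The plan is to prove the equivalence purely by \emph{assembling} the three classifications already available---Theorem~\ref{thm:slp_n>2} for $n\ge 3$, Theorem~\ref{thm:slp_twovar_p=2} for $n=2,\ p=2$, and Theorem~\ref{thm:slp_twovar_p>2} for $n=2,\ p>2$---and verifying that, in each regime, conditions~1--5 of the statement reproduce exactly the criterion supplied by the relevant theorem. Since these regimes (together with $n=1$) are exhaustive and mutually exclusive, no case is counted twice and the proof reduces to a notational reconciliation rather than any new argument.

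First I would handle the boundary cases. For $n=1$ the algebra is $K[x]/(x^{d_1})$, where each map $\cdot x^m$ has source and target of dimension at most $1$ and hence automatically has maximal rank; thus $A$ has the SLP, matching condition~1, while conditions~2--5 are inapplicable because they require $n\ge 2$. For $n\ge 3$ I would cite Theorem~\ref{thm:slp_n>2}: its first condition $t<p$ is verbatim condition~4, and its second condition becomes condition~5 once one notes that the unique index $j$ with $d_j\ge p$ is forced to be the maximal one (all others being $<p$) and that the base-$p$ units digit $c_{j0}$ of $d_j$ is the remainder $r_1$, so that $\min(c_{j0},p-c_{j0})=\min(r_1,p-r_1)$ and $\sum_{i\ne j}(d_i-1)=\sum_{i=2}^n(d_i-1)$. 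Here conditions~2 and~3 are excluded since both demand $n=2$.

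It then remains to treat $n=2$, labelling the variables so that the conventions of the cited two-variable theorems hold. When $p=2$ I would apply Theorem~\ref{thm:slp_twovar_p=2} with $d_1\le d_2$: the case ``$a=2$, $b$ odd'' reads ``$d_1=2$ and $c_{20}=1$'', and ``$a=3$, $b\equiv 2\bmod 4$'' reads ``$d_1=3$, $c_{21}=1$, $c_{20}=0$'', which together are exactly condition~2; conditions~3--5 cannot hold, since condition~3 requires $p>2$ while $d_1,d_2\ge 2$ forces $\sum(d_i-1)\ge 2=p$ and leaves no room for a factor below $p$. When $p>2$ I would run through Theorem~\ref{thm:slp_twovar_p>2} case by case: its case~(1) ($a,b<p$) gives the SLP iff $a+b\le p+1$, i.e. $(a-1)+(b-1)<p$, which is condition~4; its case~(2) ($a<p\le b$) gives the SLP iff $a\le\min(b_0,p-b_0)+1$, i.e. $(a-1)\le\min(c_{20},p-c_{20})$, which is condition~5; and its case~(3) ($a,b\ge p$) is condition~3 after renaming $a_i,b_i$ to $c_{1i},c_{2i}$ and $k,\ell$ to $k_1,k_2$.

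To close, I would confirm that exactly the intended condition fires. Because $d_1,d_2\ge 2$, the presence of any factor $d_j\ge p$ already gives $\sum(d_i-1)\ge p$, so condition~4 is incompatible with the hypothesis of condition~5; and condition~3 requires \emph{both} factors to be at least $p$ whereas condition~5 requires exactly one, so these never clash either. The only real obstacle throughout is bookkeeping---tracking which variable realizes the maximum and checking the digit identifications $r_1=c_{j0}$ and $b_0=c_{20}$ so that the relevant minima coincide---and there is no mathematical content beyond the three theorems being combined.
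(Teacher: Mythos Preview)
Your proposal is correct and follows essentially the same approach as the paper: both proofs assemble Theorems~\ref{thm:slp_n>2}, \ref{thm:slp_twovar_p>2}, and \ref{thm:slp_twovar_p=2}, and check that the five conditions of the statement reproduce those criteria case by case. Your write-up is in fact more thorough about the bookkeeping (the paper's proof is only a few lines); the one small inaccuracy is your claim that condition~3 forces both $d_i\ge p$. As literally stated, condition~3 also allows $k_1=0$, in which case the constraints collapse to $d_1=c_{10}=c_{20}=(p-1)/2$, and then the SLP follows from condition~4 (if $k_2=0$) or condition~5 (if $k_2\ge 1$) rather than from case~3 of Theorem~\ref{thm:slp_twovar_p>2}; so the equivalence survives, but the disjointness assertion in your final paragraph needs this caveat.
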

\begin{proof}
 The case $n=1$ is trivial. Condition 3 is condition 3 of Theorem \ref{thm:slp_twovar_p>2}, and Condition 4 is Theorem \ref{thm:slp_twovar_p=2} with $b=d_2$ written in base 2. The conditions 4 and 5 are the conditions 1 and 2 from Theorem \ref{thm:slp_n>2} and Theorem \ref{thm:slp_twovar_p>2} combined. Notice that 4 and 5 are not satisfied when $p=2$. 
\end{proof}

 Both proofs of \cite[Corollary 4.8]{CookII} and \cite[Theorem 4.9]{CookII} uses Theorem \ref{thm:slp_manhattan} below. This will also be the key to the proof of Theorem \ref{thm:slp_twovar_p>2}.

\begin{theorem}\label{thm:slp_manhattan}
Let $K$ be a field of characteristic ${p>0}$. The algebra $K[x,y]/(x^a,y^b)$ has the SLP if and only if
\[|a-up^i|+|b-vp^i|+|a+b-2c-wp^i| \ge p^i \]
for all integers $i \ge 0$, $1 \le c <\min(d_1,d_2)$, and $u,v,w$ such that $u+v+w$ is odd. 
\end{theorem}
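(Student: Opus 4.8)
The plan is to translate the strong Lefschetz property into the vanishing of Monsky's syzygy gap, and then to read the numerical criterion off Monsky's evaluation of that gap. First I would apply Proposition \ref{prop:slp-maxrang} to replace the SLP by the assertion that, for every $c$ with $1 \le c < \min(a,b)$, multiplication $\cdot(x+y)^{e}:A_i \to A_{i+e}$ with $e = a+b-2c$ has maximal rank in every degree. Fixing such a $c$ and writing $R=K[x,y]$, I would then introduce the module $\Syz(x^a,y^b,(x+y)^e)\subset R^3$ of relations $gx^a+hy^b+F(x+y)^e=0$. Since $x^a,y^b,(x+y)^e$ have no common factor, this module is free of rank two, with minimal generators in degrees $s_1\le s_2$ satisfying $s_1+s_2=a+b+e$; I set $\delta=s_2-s_1$, which is even because $s_1+s_2=2(a+b-c)$.

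The central reduction I would carry out is that maximal rank of $\cdot(x+y)^e$ is equivalent to $\delta=0$. The kernel of $\cdot(x+y)^e$ on $A$ is $J/(x^a,y^b)$ for the ideal $J=(x^a,y^b):(x+y)^e$, and sending a syzygy to its $F$-component gives a graded exact sequence $0\to R(-(a+b))\to \Syz(x^a,y^b,(x+y)^e)\to J\to 0$. From this I would show that the least-degree generator has nonzero $F$-component (if it were zero the generator would lie in the rank-one kernel $R(y^b,-x^a,0)$ of degree $a+b$, forcing $s_1\ge a+b>s_2$, a contradiction), of degree $s_1-e=c-\delta/2$. As $c<\min(a,b)$, this degree is $<\min(a,b)$, so the component does not lie in $(x^a,y^b)$ and is therefore the initial degree of $J/(x^a,y^b)$, i.e.\ the least degree of a nonzero zero divisor of $(x+y)^e$ in $A$. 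By Proposition \ref{prop:maxrang-inj} maximal rank holds precisely when every such zero divisor has degree $>(a+b-2-e)/2=c-1$; since the minimal one sits in degree $c-\delta/2$, this happens exactly when $\delta=0$. Thus the SLP is equivalent to $\delta\bigl(x^a,y^b,(x+y)^{a+b-2c}\bigr)=0$ for all admissible $c$.

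The final step invokes Monsky's evaluation of the gap \cite{Monsky}. For the triangle with sides $a,b,e$ (note $e=a+b-2c$ satisfies all three strict triangle inequalities), Monsky's theorem gives $\delta=\max\bigl(0,\sup\,(p^i-|a-up^i|-|b-vp^i|-|e-wp^i|)\bigr)$, the supremum taken over $i\ge 0$ and integers $u,v,w$ with $u+v+w$ odd. In particular $\delta=0$ if and only if $|a-up^i|+|b-vp^i|+|e-wp^i|\ge p^i$ for all such $i,u,v,w$. Substituting $e=a+b-2c$ and letting $c$ range over $1\le c<\min(a,b)$ then turns ``$\delta=0$ for every $c$'' into exactly the stated inequality, completing the chain of equivalences.

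The deepest external input is Monsky's evaluation itself, whose base-$p$ self-similar analysis of $\delta$ is the genuinely hard ingredient; my task on the Lefschetz side reduces to the clean syzygy-module computation above, which is the ``new proof of the connection to the SLP'' the paper advertises. The remaining point of care, and where the algebraic-closedness hypothesis of \cite{Monsky} and \cite{Brenner-Kaid-syzbundle} must be removed, is the passage to $\overline{K}$: I would observe that the graded Betti numbers of $\Syz(x^a,y^b,(x+y)^e)$, and hence the degrees $s_1,s_2$ and the gap $\delta$, are unchanged under the flat base change $K\to\overline{K}$, while ranks of the multiplication matrices are likewise field-independent. Verifying this stability is what lets me apply Monsky's formula over $\overline{K}$ yet conclude over an arbitrary field $K$, and is the technical heart flagged for Section \ref{sec:syz-gap}.
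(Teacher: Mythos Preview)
Your proof is correct and your reduction of the SLP to the vanishing of the syzygy gap $\delta$ is the same as the paper's (this is the content of Proposition~\ref{prop:maxrang-delta} and Theorem~\ref{thm:slp_delta=0}, and your argument that the minimal syzygy has nonzero third component, hence furnishes a zero divisor in degree $c-\delta/2$, is essentially identical to the paper's).

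Where you diverge is in the last step and in the handling of the ground field. You invoke Monsky's evaluation of $\delta$ over $\overline{K}$ as a black box and then transport the conclusion back to $K$ by observing that both the graded Betti numbers of $R/(x^a,y^b,(x+y)^e)$ (hence $s_1,s_2,\delta$) and the ranks of the multiplication maps are unchanged under the flat base change $K\to\overline{K}$. This is valid and economical. The paper instead re-derives the criterion ``$\delta=0$ iff the Manhattan inequalities hold'' from scratch over an arbitrary field (Theorem~\ref{thm:delta=0_kriterium}), by running the Lipschitz step property of $\delta$ (Lemma~\ref{lemma:delta_plusminus1}, Lemma~\ref{lemma:delta_ineq}) to a local maximum, showing via Mason--Stothers (Proposition~\ref{thm8monsky}) that such a maximum must sit at a point $p^s\cdot(u,v,w)$ with $u+v+w$ odd and $\delta=p^s$ there, and combining with the parity and Frobenius lemmas (Lemmas~\ref{lemma:delta_parity} and \ref{lemma:delta_p^s}). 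Your route is shorter and entirely legitimate once one is willing to cite \cite{Monsky}; the paper's route is self-contained, exhibits the mechanism behind the formula, and makes explicit that nothing in Monsky's argument actually needs $K$ to be algebraically closed, which is one of the paper's stated aims.
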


Theorem \ref{thm:slp_manhattan} is proved in Section \ref{sec:syz-gap}.

We will now prove that Theorem \ref{thm:slp_manhattan} can be reformulated as the following proposition. 

\begin{prop}\label{prop:slp_step}
 Let $A=K[x,y]/(x^a,y^b)$, where $K$ is a field of characteristic $p>0$. For each integer $i \ge 1$ we can write $a=m_ip^i+r_i$, and $b=n_ip^i+s_i$, where $0 \le r_i,s_i <p^i$. The algebra $A$ has the SLP if and only if the following conditions hold for all $i$. 
 \begin{enumerate}
  \item If $m_i>0$, then $r_i \ge s_i-1$,
  \item If $n_i>0$, then $s_i \ge r_i-1$,
  \item If $m_i >0$ and $n_i>0$, then $r_i+s_i \ge p^i-1$,
  \item $r_i+s_i \le p^i+1$.
 \end{enumerate}
\end{prop}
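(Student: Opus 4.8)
The plan is to prove the stated equivalence one modulus at a time. Both the Manhattan inequality of Theorem \ref{thm:slp_manhattan} and the four conditions are indexed by $i$, so it suffices to fix $i$, put $q=p^i$, and show that the inequality holds for this $i$ (for all admissible $c$ and all $u,v,w$ with $u+v+w$ odd) exactly when conditions (1)--(4) hold for this $i$. The case $i=0$ is automatic: there $q=1$, the choice $u=a,\ v=b,\ w=a+b-2c$ annihilates all three terms but has $u+v+w$ even, and perturbing a single index shows the minimum over odd $u+v+w$ equals $1\ge q$. So assume $i\ge 1$ and write $a=mq+r$, $b=nq+s$ with $0\le r,s<q$ (suppressing the subscript $i$). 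In this notation the conditions read: (1) $m>0\Rightarrow r\ge s-1$; (2) $n>0\Rightarrow s\ge r-1$; (3) $m,n>0\Rightarrow r+s\ge q-1$; (4) $r+s\le q+1$. Writing
\[
 \Phi=|a-uq|+|b-vq|+|a+b-2c-wq|,
\]
I must show that $\min\Phi\ge q$, the minimum taken over $1\le c<\min(a,b)$ and $u+v+w$ odd, holds precisely when (1)--(4) hold; equivalently, that $\min\Phi\le q-1$ exactly when one of the conditions fails.

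Next I would reduce the first two summands. If $\Phi\le q-1$ then each summand is at most $q-1$, forcing $|a-uq|\in\{r,q-r\}$ and $|b-vq|\in\{s,q-s\}$; the chosen value fixes the parity of $u$ (either $u\equiv m$ or $u\equiv m+1$) and of $v$ (either $v\equiv n$ or $v\equiv n+1$). Since $u+v+w$ is odd, this determines the parity of $w$, and the four sign patterns fall into two classes, one forcing $w\equiv m+n+1$ and the other $w\equiv m+n\pmod 2$. Hence $\min\Phi$ is the minimum over the four patterns of $(T_1+T_2)+T_3^{\min}$, where $T_1+T_2$ ranges over $\{r+s,\ r+(q-s),\ (q-r)+s,\ (q-r)+(q-s)\}$ and $T_3^{\min}$ is the least value of $|a+b-2c-wq|$ with $w$ of the forced parity and $c$ in range.

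The core of the proof is the evaluation of $T_3^{\min}$, where the quotients $m,n$ enter. As $c$ runs over $[1,\min(a,b)-1]$, the quantity $a+b-2c$ sweeps the arithmetic progression from $|a-b|+2$ to $a+b-2$ in steps of $2$, and $T_3^{\min}$ is the distance from this progression to the nearest multiple $wq$ of the prescribed parity. When $m=n=0$ the progression lies in $(0,q)$, only the multiples $0$ and $\pm q$ are reachable, and checking the four patterns gives $\min\Phi\le q-1$ if and only if $r+s\ge q+2$, that is, if and only if (4) fails. When exactly one quotient is positive the reachable multiples shift accordingly, and the two relevant patterns reproduce (4) together with the surviving one of (1), (2). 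When $m,n>0$ both $a,b\ge q$, so the progression has length $2\min(a,b)-4$ and (outside a few small moduli) contains multiples of $q$ of both parities, whence $T_3^{\min}\le 1$ for either forced parity; the binding quantity is then $\min(T_1+T_2)$, and one matches each failure of (1), (2), (3), (4) to an explicit pattern with $T_1+T_2\le q-2$ and so $\Phi\le q-1$, while conversely (1)--(4) together force $T_1+T_2\ge q-1$ in every pattern.

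The step I expect to be the main obstacle is the parity bookkeeping at the boundary $T_1+T_2=q-1$. There a single $T_3=0$ would break the bound $\min\Phi\ge q$, and what rescues the argument is that the parity of $w$ forced by that pattern is opposite to the parity of $a+b-2c$, so $T_3$ is odd and hence at least $1$. Checking this coupling across all four patterns, and separately handling the degenerate cases $r=0$ or $s=0$, the small moduli $q\in\{2,3\}$, and the characteristic two behaviour (where $wq$ is always even, so it is the parity of $a+b$ alone that controls $T_3$), is the delicate part; once these verifications are in place, reassembling the per-$i$ equivalences finishes the proof.
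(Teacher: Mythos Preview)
Your approach is essentially the same as the paper's. Both proofs start from Theorem~\ref{thm:slp_manhattan}, fix $i$, observe that $|a-uq|<q$ forces $u\in\{m_i,m_i+1\}$ (and similarly for $v$), and then analyse the four resulting sign patterns, using the parity constraint on $w$ together with the range of admissible $c$ to determine exactly when $\Phi<q$ is achievable. The parity observation you single out as the ``main obstacle''---that at the boundary $T_1+T_2=q-1$ the forced parity of $w$ makes $a+b-wq$ odd, hence $T_3\ge 1$---is precisely the mechanism the paper invokes in each of its four cases.

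The only real difference is organizational. The paper runs through the four patterns I--IV in turn and, inside each, checks what changes when $m_i=0$ or $n_i=0$; you instead first split on whether $m,n$ vanish and then look at the patterns. The paper's ordering turns out to be cleaner: none of the edge cases you flag (small $q$, characteristic two, $r=0$ or $s=0$) requires separate treatment there---the explicit choices of $w$ and $c$ in each case work uniformly, so you are anticipating difficulties that do not materialize. One small overstatement in your sketch: the claim that $T_3^{\min}\le 1$ for \emph{either} forced parity whenever $m,n>0$ is not literally true (e.g.\ $a=b=q$, patterns II/III), but you only actually need it for the single pattern attached to a failing condition, and there the explicit $c$ does give $T_3\le 1$, so the argument survives.
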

\begin{proof}
We shall prove that the conditions above is equivalent to that in Theorem \ref{thm:slp_manhattan}. 
Let us investigate for which $a$ and $b$ it can happen that 
\[
 |a-up^i| + |b-vp^i| + |a+b-2c-wp^i| < p^i.
 \]
Write $a=m_ip^i + r_i$ and $b=n_ip^i+s_i$, as in the proposition. Notice that 
\[
 |a-up^i| = \left\{ \begin{array}{ll}
                     r_i & \textrm{when } u=m_i \\
                     p^i-r_i & \textrm{when } u=m_i+1.
                    \end{array} \right.
\]
For all other values of $u$ we get $|a-up^i|\ge p^i$, and then of course $|a-up^i| + |b-vp^i| + |a+b-2c-wp^i|\ge p^i$. Therefore we only need to consider $u=m_i$ and $u=m_i+1$. The corresponding is also true for $|b-vp^i|$. This gives us four cases to examine. 
\begin{enumerate}[{\bf I.}]
\item $u=m_i$ and $v=n_i$\\
Here 
\[
 |a-up^i| + |b-vp^i| = r_i+s_i.
\]
To obtain $|a-up^i| + |b-vp^i| + |a+b-2c-wp^i|\le p^i-1$ it is necessary that $r_i+s_i \le p^i-1$. 

Suppose first that $r_i+s_i=p^i-1$. Since $u+v+w=m_i+n_i+w$ is supposed to be odd, we must have $w=m_i+n_i -2d+1$, for some integer $d$. Then
\begin{align*}
 a+b-2c-wp^i &= n_ip^i + r_i + m_ip^i+s_i -2c -(m_i+n_i -2d+1)p^i\\
 &= r_i+s_i-2c  + (2d-1)p^i = 2dp^i-2c-1,
\end{align*}
which is an odd number, and thus $|a+b-2c-wp^i|\ge 1$. We get 
\[
 |a-up^i| + |b-vp^i| + |a+b-2c-wp^i| \ge p^i-1+1=p^i,
\]
and we can conclude that $|a-up^i| + |b-vp^i| + |a+b-2c-wp^i|\ge p^i$ for all $w$ and $c$, when $r_i+s_i=p^i-1$.

Now suppose that $r_i+s_i \le p^i-2$. We want to find out what the smallest possible value of $|a+b-2c-wp^i|$ is. For this purpose we choose the largest $w$ such that $u+v+w$ is odd, and $a+b-wp^i>0$. After that we choose the value for $c$ that makes $|a+b-2c-wp^i|$ as small as possible. Since $r_i+s_i \le p^i-2$, the largest $w$ with the required properties is $w=m_i+n_i-1$. Then
\[
 a+b-wp^i = p^i+r_i+s_i.
\]
If $m_i=0$, then $\min(a,b) = \min(r_i,b) \le r_i$ and $c \le r_i-1$. Then
\[
 a+b-2c-wp^i \ge p^i+r_i+s_i -2(r_i-1) = p^i-r_i+s_i+2, ~~\textrm{and}
\]
\[
 |a-up^i| + |b-vp^i| + |a+b-2c-wp^i| \ge r_i+s_i + p^i-r_i+s_i+2>p^i.
\]
In a similar way we see that $|a-up^i| + |b-vp^i| + |a+b-2c-wp^i|>p^i$ if $n_i=0$. Suppose now that $m_i>0$ och $n_i>0$. Then we choose $c=[(p^i+r_i+s_i)/2]$, where $[. . .]$ denotes the integer part. This gives 
\[
 a+b-2c-wp^i = 0 ~ \textrm{or} ~ 1, ~~\textrm{and}
\]
\[
 |a-up^i| + |b-vp^i| + |a+b-2c-wp^i| \le r_i + s_i +1 \le p^i -1. 
\]
The conclusion, in this case, is that $|a-up^i| + |b-vp^i| + |a+b-2c-wp^i|<p^i$, exactly when $m_i,n_i>0$ and $r_i+s_i \le p^i-2$. This corresponds to condition 3 in the proposition. 

\item $u=m_i$ and $v=n_i+1$\\
Here 
\[
 |a-up^i| + |b-vp^i| = r_i+p^i-s_i.
\]
To obtain $|a-up^i| + |b-vp^i| + |a+b-2c-wp^i|\le p^i-1$ it is necessary that $r_i+p^i-s_i \le p^i-1$, that is $r_i \le s_i-1$. Let us first consider the case when $r_i=s_i-1$. Since $u+v+w$ is supposed to be odd we must have $w=n_i+m_i -2d$, for some integer $d$. This gives
\[
 a+b-wp^i = r_i+s_i+2dp^i = 2r_i+1 + 2dp^i,
\]
which is odd. Then $|a+b-2c-wp^i|\ge 1$, and 
\[
 |a-up^i| + |b-vp^i| + |a+b-2c-wp^i| \ge r_i+p^i-s_i+1 =p^i.
\]
Suppose instead that $r_i \le s_i-2$. We use that same idea as in case 1, and choose first $w$, and then $c$, such that $|a+b-2c-wp^i|$ has the smallest possible value. The best option for $w$ is $w=n_i+m_i$. This gives
\[
a+b-wp^i=r_i+s_i.
\]
If $m_i=0$, then $\min(a,b)=\min(r_i,b)=r_i$, thus $c=r_i-1$ is the largest allowed value of $c$. Then 
\[
 a+b-2c-wp^i = r_i+s_i-2(r_i-1) = s_i-r_i+2, ~~\textrm{and}
\]
\[
 |a-up^i| + |b-vp^i| + |a+b-2c-wp^i| = r_i+p^i-s_i + s_i-r_i+2=p^i+2.
\]
If $m_i>0$ on the other hand, we are allowed tho choose $c=s_i-1$. Then we get
\[
 a+b-2c-wp^i = r_i-s_i+2
\]
instead. Note that this is a non-positive number. This gives
\[
 |a-up^i| + |b-vp^i| + |a+b-2c-wp^i| = r_i+p^i-s_i + s_i-r_i-2=p^i-2.
\]
The conclusion, in this case, is that $|a-up^i| + |b-vp^i| + |a+b-2c-wp^i|<p^i$, exactly when $m_i>0$ and $r_i \le s_i-2$. This corresponds to condition 1 in the proposition. 

\item $u=m_i+1$ and $v=n_i$

In the same way as above, we see that this corresponds to condition 2. 

\item $u=m_i+1$ och $v=n_i+1$

Here
\[
|a-up^i| + |b-vp^i| = 2p^i-r_i-s_i,
\]
so for this to be smaller than $p^i$ we must have $2p^i-r_i-s_i \le p^i-1$, which is $r_i+s_i \ge p^i+1$. Consider first the case when $r_i+s_i=p^i+1$. Then we must choose $w=m_i+n_i-2d+1$, for some integer $d$. Then 
\[
 a+b-wp^i = r_i+s_i +(2d-1)p^i = 2dp^i+1,
\]
and $|a+b-2c-wp^i|\ge 1$. Then we get 
\[
 |a-up^i| + |b-vp^i| + |a+b-2c-wp^i| \ge 2p^i-r_i-s_i+1 = p^i.
\]
Suppose now that $r_i + s_i \ge p^i +2$. We choose $w=m_i+n_i+1$ and $c=[(r_i+s_i-p^i)/2]$, because this gives
\[
 a+b-2c-wp^i = r_i+s_i-p^i -2c = 0 ~ \textrm{or} ~ 1,~~ \textrm{and}
\]
\[
 |a-up^i| + |b-vp^i| + |a+b-2c-wp^i| \le 2p^i-r_i-s_i +1 \le p^i -1.
\]
This shows that $|a-up^i| + |b-vp^i| + |a+b-2c-wp^i|<p^i$ when $r_i + s_i \ge p^i +2$, which is condition 4.
\end{enumerate}
\end{proof}

Proposition \ref{prop:slp_step} will be used later in this section to prove Proposition \ref{prop:monomial_kernel}, which says something about the structure of an algebra that does not have the SLP. Now we shall use Proposition \ref{prop:slp_step}, with $p>2$, to prove Theorem \ref{thm:slp_twovar_p>2}. 

\begin{proof}[Proof of Theorem \ref{thm:slp_twovar_p>2}] Let $A=K[x,y]/(x^a,y^b)$, and suppose throughout this proof that the characteristic of $K$ is greater than 2. Write $a$ and $b$ in base $p$ as $a=a_kp^k + \dots + a_1p+a_0$ and $b=b_\ell p^\ell + \dots + b_1p+b_0$, where $0 \le a_i,b_i <p$. We assume that $ \ell \le k$. With the notation $a=m_ip^i+r_i$ from Proposition \ref{prop:slp_step} we have $r_i=a_{i-1}{p^{i-1}} + \dots + a_1p+a_0$, and $m_i=a_kp^{k-i} + a_{k-1}p^{k-i-1}+ \dots + a_i$, and similar for $b$.  
 
If $a,b < p$ then $n_i=m_i=0$ in Proposition \ref{prop:slp_step}, for all $i$, and the conditions 1, 2 and 3 are trivially satisfied. Since $a+b<2p$ condition 4 is satisfied for $i>1$. The only restriction we get comes from condition 4 when $i=1$, and states that $A$ has the SLP if and only if $a+b \le p+1$. 

If $a < p$ and $b \ge p$ we get $b_0 \ge a_0-1$ and $a_0+b_0 \le p+1$ from the conditions 2 and 4 with $i=1$. These two inequalities can be written as $a_0 \le \min(b_0, p-b_0)+1$. In condition 1 and 3 there is nothing to check, and for $i>1$ all conditions are satisfied. We get that $A$ has the SLP if and only if $a_0 \le \min(b_0, p-b_0)+1$.

Assume now that $a, b \ge p$. The idea now is to translate the four conditions of Proposition \ref{prop:slp_step} into the base $p$ digits of $a$ and $b$.

Let us first look at $i=1$ in Proposition \ref{prop:slp_step}. We know that $m_1,n_1>0$, so 1 and 2 gives $a_0-1 \le b_0 \le a_0+1$. The conditions 3 and 4 gives $p-1 \le a_0+b_0 \le p+1$. Both these intequalites are satisfied exactly when $a_0=\frac{p \pm 1}{2}$ and $b_0=\frac{p \pm 1}{2}$. This is condition (a) in Theorem \ref{thm:slp_twovar_p>2}. Suppose that this is the case, and move on to $i=2$. If $k \ge 2$ then $m_2$ and $n_2$ are positive. The conditions 1 and 2 gives
\[
 a_1p+a_0 -1 \le b_1p+b_0 \le a_1p+a_0+1,
\]
which implies $a_1=b_1$. For 3 and 4 to be satisfied
\[
 p^2-1 \le (a_1+b_1)p+(a_0+b_0) \le p^2+1
\]
is required. This is true if and only if $a_1+b_1=p-1$. Hence we get $a_1=b_1=\frac{p-1}{2}$. We suppose that this is true and continue with $i=3, \dots k$. In the same way as above we get $a_2= \dots =a_{k-1} = b_2 = \dots =b_{k-1} = \frac{p-1}{2}$. This is condition (b) in Theorem \ref{thm:slp_twovar_p>2}.

Suppose that the conditions for $i=1, 2 \ldots, k$ are satisfied, and move on to $i=k+1$. Now $m_{k+1}=0$, so in condition 1 and 3 there is nothing to check. If $\ell >k$ then $n_{k+1}>0$. In this case condition 2 says
\[
 b_kp^k + \dots + b_1p+b_0 \ge a_kp^k + \dots + a_1p+a_0-1,
\]
which holds if and only if $b_k \ge a_k$. Condition 4 says
\[
 (a_k+b_k)p^k + \dots + (a_1+b_1)p+(a_0+b_0) \le p^{k+1} +1,
\]
which holds if and only if  $a_k+b_k \le p-1$. This proves (c). 

We must also show that there are no futher restrictions on $b_j$ for $j>k$, when such $b_j$ exist. Suppose that the four conditions of Proposition \ref{prop:slp_step} are satisfied for $i=1, 2, \ldots, k+1$. We continue by looking at $i=k+2$. The conditions 1 and 3 are satisfied, since $m_i=0$. Notice also that $r_{k+2}=r_{k+1}=a$, and $s_{k+2} \ge s_{k+1}$. This means that if condition 2 is satisfied for $i=k+1$, so it is for $i=k+2$. Condition 4 requires
\[
 b_{k+1}p^{k+1} + (a_k+b_k)p^k + \dots +(a_1+b_1)p + (a_0+b_0) \le p^{k+2} +1.
\]
But this is no restriction on $b_{k+1}$, other than $b_{k+1}<p$. The same reasoning works for larger $i$. 
\end{proof}

The proof in \cite{Lundqvist-Nicklasson} of when an algebra in three or more variables does not have the SLP, is carried out by finding a monomial zero-divisor of $(x_1+ \dots + x_n)^m$, for some $m$. We will now see that this can also be done in two variables. This gives an alternative proof of the ''only if''-part of Theorem \ref{thm:slp_twovar_p>2}.

\begin{prop}\label{prop:monomial_kernel}
 Let $A=K[x_1, \ldots, x_n]/(x_1^{d_1}, \ldots, x_n^{d_n})=\bigoplus_{i \ge 0} A_i$ be an algebra of characteristic $p>0$ which does not possess the SLP. Let $\ell$ be a linear form in $A$. Then there are integers $d$ and $m$ such that $\HF_A(d) \le \HF_A(d+m)$, and the kernel of the multiplication map $\cdot \ell^m:A_d \to A_{d+m}$ contains a non-zero monomial.
\end{prop}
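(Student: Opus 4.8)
The plan is to reduce to the form $x_1+\cdots+x_n$ and then convert the failure of the SLP into an explicit monomial. To normalize $\ell$, write $\ell=\sum_i c_ix_i$ and $\Lambda=\{i:c_i\neq0\}$; the rescaling $x_i\mapsto c_i^{-1}x_i$ ($i\in\Lambda$) fixes the ideal $(x_1^{d_1},\dots,x_n^{d_n})$ and sends monomials to monomials, so I may assume $\ell=\sum_{i\in\Lambda}x_i$. If $\Lambda\subsetneq\{1,\dots,n\}$, take $m=1+\sum_{i\in\Lambda}(d_i-1)$; then every degree-$m$ monomial in the variables $x_i$, $i\in\Lambda$, leaves the box, so $\ell^m=0$ in $A$ and $1\in\ker(\cdot\ell^m:A_0\to A_m)$. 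Since some $d_j\ge2$ with $j\notin\Lambda$, we have $m\le t:=\sum_i(d_i-1)$, giving $\HF_A(0)=1\le\HF_A(m)$. This disposes of proper $\Lambda$, so it remains to treat $\ell=x_1+\cdots+x_n$.

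The engine is the identity that, for a monomial $M=\prod_i x_i^{\alpha_i}$ with $\alpha_i<d_i$, one has $\ell^mM=0$ in $A$ if and only if $\ell^m=0$ in the truncated complete intersection $B=K[x_1,\dots,x_n]/(x_i^{d_i-\alpha_i})_i$: the only terms of $\ell^mM$ not already lying in $(x_i^{d_i})_i$ are precisely those whose exponent vectors survive in $B$. By the monotonicity of $\HF_A$ recorded before Proposition \ref{prop:maxrang-inj}, the requirement $\HF_A(\deg M)\le\HF_A(\deg M+m)$ holds as soon as $\sum_i\alpha_i\le(t-m)/2$. The task thus becomes combinatorial: produce a truncation $(d_i-\alpha_i)_i$ and a power $m$ with $\ell^m=0$ in $B$ and $\sum_i\alpha_i\le(t-m)/2$. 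For $n=2$, $B=K[x,y]/(x^{a'},y^{b'})$ and $\ell^m=0$ means $\binom{m}{k}\equiv0\pmod p$ for every $k$ in the window $m-b'+1\le k\le a'-1$, a condition on the base-$p$ digits of $m$ governed by Lucas' and Kummer's theorems.

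To locate a suitable truncation I would invoke Proposition \ref{prop:slp_step}: if $A$ lacks the SLP then, for $n=2$, one of its four conditions fails at some index $i$; write $q=p^i$, $a=m_iq+r_i$, $b=n_iq+s_i$. Each failure prescribes the truncation and power from $r_i,s_i,m_i,n_i$. For example, when condition 3 fails with $m_i=n_i=1$, take $a'=b'=q$, i.e. $M=x^{r_i}y^{s_i}$, together with the Frobenius power $m=q$: then $\ell^q=(x+y)^q=x^q+y^q$, so $\ell^qM=x^ay^{s_i}+x^{r_i}y^b=0$, and the inequality $r_i+s_i\le q-2$ defining this failure is exactly $\sum_i\alpha_i\le(t-q)/2$. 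The remaining failing conditions are handled analogously; for $n\ge3$ the corresponding digit criterion is Theorem \ref{thm:slp_n>2} and the monomial is built as in \cite{Lundqvist-Nicklasson}.

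The hard part is this last construction. Depending on the digits, the clean Frobenius power $m=p^i$ will overshoot the degree bound and must be replaced by a non-Frobenius $m$ whose entire binomial window vanishes through carries (Kummer's theorem) rather than through the identity $x^{p^i}+y^{p^i}$. One must choose the truncation large enough to keep $\deg M=\sum_i\alpha_i$ in the left half $\sum_i\alpha_i\le(t-m)/2$, yet small enough that $\ell^m$ genuinely vanishes in $B$; reconciling these opposing constraints while verifying the digit condition over the whole window, across the several cases of Proposition \ref{prop:slp_step}, is where essentially all the work lies.
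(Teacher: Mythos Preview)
Your reduction to $\ell=\sum_{i\in\Lambda}x_i$ and the handling of proper $\Lambda$ are fine (the assumption that some $d_j\ge 2$ with $j\notin\Lambda$ is harmless, since otherwise those variables are zero in $A$ and one is back in the full-support case). The truncation framework and the appeal to Proposition~\ref{prop:slp_step} are also exactly how the paper proceeds for $n=2$. The gap is in your final paragraph: you assert that the ``clean Frobenius power $m=p^i$'' fails in general and that one must pass to a non-Frobenius $m$ whose binomial window is analyzed via Kummer's theorem. You then leave that analysis undone, so the proof is incomplete; and in fact the premise is mistaken.

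The observation you are missing is that one should take $m$ to be a suitable \emph{multiple} of $p^i$, not $p^i$ itself. With $q=p^i$ one has $\ell^{jq}=(c_1^{q}x^{q}+c_2^{q}y^{q})^{j}$, and in $A$ only the terms $x^{kq}y^{(j-k)q}$ with $kq<d_1$ and $(j-k)q<d_2$ survive, i.e.\ only $k\in\{m_i+n_i-j,\dots,m_i\}\cap\{0,\dots,j\}$. Taking $j=m_i+n_i$ (for condition~1 or~2), $j=m_i+n_i-1$ (condition~3), or $j=m_i+n_i+1$ (condition~4), together with the monomials $x^{r_i}$, $y^{s_i}$, $x^{r_i}y^{s_i}$, and $1$ respectively, makes every surviving term vanish after multiplication by the monomial; and in each case the inequality defining the failed condition is \emph{exactly} the degree bound $\deg M\le (t-m)/2$. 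For instance, for condition~3 with arbitrary $m_i,n_i>0$ (not just your special case $m_i=n_i=1$), $\ell^{(m_i+n_i-1)q}$ reduces in $A$ to $e_1x^{(m_i-1)q}y^{n_iq}+e_2x^{m_iq}y^{(n_i-1)q}$, and multiplying by $x^{r_i}y^{s_i}$ kills both terms since the exponents hit $d_1$ or $d_2$. No Kummer-type digit analysis of a binomial window is needed at all. The paper's proof in fact carries this out directly for an arbitrary $\ell=c_1x+c_2y$, so even your preliminary reduction to $\ell=x+y$ is unnecessary.
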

\begin{proof}
For the case $n \ge 3$, see \cite{Lundqvist-Nicklasson}. 

Assume $n=2$, and let $\ell=c_1x_1+c_2x_2$ for some $c_1,c_2 \in K$. Recall that $\HF_A(d) \le \HF_A(d+m)$ when $d \le (d_1+d_2-2-m)/2$. We shall prove that when one of the conditions in Proposition \ref{prop:slp_step} fails, we can find a monomial of degree low enough, which is a zero divisor of some power of $\ell$. Write $d_1=m_ip^i+r_i$ and $d_2=n_ip^i+s_i$, for some $i$, as in Proposition \ref{prop:slp_step}, and suppose that condition 1 fails for this $i$. This means that $m_i>0$ and $r_i \le s_i-2$. Then $r_i < d_1$, and therefore $x_1^{r_i} \ne 0$. Recall that
\[\ell^{p^i}=(c_1x_1+c_2x_2)^{p^i}=c_1^{p^i}x_1^{p^i}+c_2^{p^i}x_2^{p^i},\]
since we are in a ring of characteristic $p$. Also,
\[
 (c_1^{p^i}x_1^{p^i}+c_2^{p^i}x_2^{p^i})^{m_i+n_i}=ex_1^{m_ip^i}x_2^{n_ip^i} ~~ \textrm{in}~ A, ~ \textrm{for some} ~ e \in K
\]
since all the other terms in the expansion will be of the form $cx_1^\alpha x_2^\beta$ where either $\alpha \ge d_1$ or $\beta \ge d_2$. 
We have 
\[
 \ell^{(m_i+n_i)p^i}x_1^{r_i}=(c_1^{p^i}x_1^{p^i}+c_2^{p^i}x_2^{p^i})^{m_i+n_i}x_1^{r_i}=ex_1^{m_ip^i}x_2^{n_ip^i}x_1^{r_i}\!=ex_1^{m_ip^i+r_i}x_2^{n_ip^i}\!=0.
\]
In other words, $x_1^{r_i}$ is a monomial in the kernel of the multiplication map $\cdot \ell^{(m_i+n_i)p^i}:A_{r_i} \to A_{r_i+(m_i+n_i)p^i}$, and since
\[
 r_i \le s_i -2 ~~ \iff ~~ r_i \le \frac{r_i+s_i-2}{2} ~~ \iff ~~ r_i \le \frac{d_1+d_2-2-(m_i+n_i)p^i}{2}
\]
we have $\HF_A(r_i) \le \HF_A(r_i+(m_i+n_i)p^i)$. 

If instead conditions 2 of Proposition \ref{prop:slp_step} fails, the proof is carried out in the same way, but with $x_1^{r_i}$ replaced by $x_2^{s_i}$. Suppose now that condition 3 fails for some $i$. That is $m_i,n_i >0$, and $r_i+s_i \le p^i-2$. Then $x_1^{r_i}x_2^{s_i} \ne 0$. We have
\[
 \ell^{(m_i+n_i-1)p^i}=(c_1^{p^i}x_1^{p^i}+c_2^{p^i}x_2^{p^i})^{m_i+n_i-1}=e_1x_1^{(m_i-1)p^i}x_2^{n_ip^i}+e_2x_2^{m_ip^i}x_2^{(n_i-1)p^i}
\]
for some $e_1,e_2 \in K$, and we see that $\ell^{(m_i+n_i-1)p^i}x_1^{r_i}x_2^{s_i} = 0$. Also,
\[
 r_i+s_i \le p^i-2 ~ \iff ~ r_i+s_i \le \frac{r_i+s_i-2+p^i}{2} = \frac{d_1+d_2-2-(m_i+n_i-1)p^i}{2},
\]
which implies that $\HF_A(r_i+s_i) \le \HF_A(r_i+s_i+(m_i+n_i-1)p^i)$.

At last, suppose that condition 4 of Proposition \ref{prop:slp_step} fails. Then $r_i+s_i \ge p^i+2$. This implies that $d_1+d_2-2=m_ip^i+r_i+n_ip^i+s_i \ge (m_i+n_i+1)p^i$, and $\HF((m_i+n_i+1)p^i) \ge 1$. But 
\[
 \ell^{(m_i+n_i+1)p^i}=(c_1^{p^i}x_1^{p^i}+c_2^{p^i}x_2^{p^i})^{m_i+n_i+1}=0,
\]
since all terms in the expansion will be of the form $cx_1^\alpha x_2^\beta$ where either $\alpha \ge d_1$ or $\beta \ge d_2$. This shows that 1 is in the kernel of the multiplication map $\cdot \ell^{(m_i+n_i+1)p^i}: A_{0} \to A_{(m_i+n_i+1)p^i}$. Since $\HF((m_i+n_i+1)p^i) \ge 1 = \HF(0)$, this completes the proof. 
\end{proof}

\section{The Syzygy gap}\label{sec:syz-gap}
The main purpose of this section is to prove Theorem \ref{thm:slp_manhattan}. If we require the residue field to be algebraically closed, the theorem follows from combining a theorem by Han \cite{Han} and results by Brenner and Kaid in \cite{Brenner-Kaid-syzbundle} and \cite{Brenner-lookingout}. Han's result is also proved in a different way by Monsky in \cite{Monsky}. Monsky deals with the syzygy module of three pairwise relatively prime polynomials in two variables, and the so called ''syzygy gap'', while Brenner and Kaid connects this to the Lefschetz properties. We will go through the results from \cite{Monsky}, and give a new proof of the connection to the SLP in the case of monomial complete intersections. The reason to go though the results of \cite{Monsky} is to prove that the residue field does not need to be algebraically closed, but also to give a deeper understanding of Theorem \ref{thm:slp_manhattan} and the theory behind it.  

\subsection{Mason-Stothers' Theorem}
First we need a review of Mason-Stothers' Theorem. Suppose $f$ is a polynomial in $K[x_1, \ldots, x_n]$, where $K$ is some field. The polynomial $f$ can be factorized as $f=\prod_{i=1}^sp_i^{e_i}$, where the $p_i$'s are distinct irreducible factors. Define $\rad(f)=\deg(\prod_{i=1}^sp_i)$. Note that $\rad(fg) \le \rad(f)+\rad(g)$, with equality when $f$ and $g$ are relatively prime. Let $f'_{x_j}$ denote the formal derivative of $f$ w. r. t. the variable $x_j$. When in a polynomial ring with just one variable, we write $f'$ for the derivative. Mason-Stothers' theorem is usually formulated over one variable, as follows. 

\begin{theorem}[Mason-Stothers]\label{thm:Mason1var}
 Let $K$ be a field, and let $f,g$ and $h$ be polynomials in $K[x]$ such that 
 \begin{itemize}
  \item $f,g$ and $h$ are pairwise relatively prime,
  \item $f', g'$ and $h'$ are not all zero,
  \item $f+g+h=0$.
 \end{itemize}
Then $\max(\deg(f), \deg(g), \deg(h)) \le \rad(fgh)-1$.
\end{theorem}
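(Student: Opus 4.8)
The plan is to run the classical Wronskian argument, adapted to arbitrary characteristic. First I would differentiate the relation $f+g+h=0$ to obtain $f'+g'+h'=0$, and introduce the Wronskian-type expression $W=fg'-f'g$. Substituting $h=-(f+g)$ and $h'=-(f'+g')$, a one-line computation shows that the three pairwise Wronskians coincide:
\[
 W=fg'-f'g=gh'-g'h=hf'-h'f.
\]
This symmetry is exactly what will let me bound all three degrees at the end rather than just one.

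The next step is to show $W\ne0$, and this is where the hypothesis that $f',g',h'$ are not all zero is needed. If $W=0$, then $fg'=f'g$; since $\gcd(f,g)=1$, this forces $f\mid f'$, and because $\deg f'<\deg f$ we conclude $f'=0$. Symmetrically $g'=0$, and then $h'=-(f'+g')=0$, contradicting the hypothesis. Hence $W\ne0$. I expect this to be the main obstacle: in positive characteristic a nonconstant polynomial can have vanishing derivative, so $W\ne0$ is genuinely hypothesis-driven here rather than automatic, unlike the characteristic-zero case.

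Then I would extract the crucial divisibility. For $q=\prod_i p_i^{a_i}$ with the $p_i$ distinct and irreducible, write $q^{\ast}=\prod_i p_i$, so that $\rad(q)=\deg q^{\ast}$; since each $p_i^{a_i-1}$ divides $q'$, the quotient $q/q^{\ast}=\prod_i p_i^{a_i-1}$ divides both $q$ and $q'$. Applying this to $f$ and reading $W=hf'-h'f$, both terms are divisible by $f/f^{\ast}$, so $f/f^{\ast}\mid W$; reading $W=fg'-f'g$ and $W=gh'-g'h$ gives $g/g^{\ast}\mid W$ and $h/h^{\ast}\mid W$ in the same way. As $f,g,h$ are pairwise relatively prime, so are $f/f^{\ast},g/g^{\ast},h/h^{\ast}$, and therefore their product $\frac{fgh}{f^{\ast}g^{\ast}h^{\ast}}$ divides $W$, where $\deg(f^{\ast}g^{\ast}h^{\ast})=\rad(fgh)$.

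Finally I would count degrees. Since $W\ne0$, the quotient above has degree at most $\deg W$, and from $W=fg'-f'g$ we have $\deg W\le\deg f+\deg g-1$; hence
\[
 \deg f+\deg g+\deg h-\rad(fgh)\le\deg f+\deg g-1,
\]
which rearranges to $\deg h\le\rad(fgh)-1$. Reading $W$ instead as $gh'-g'h$ and as $hf'-h'f$ bounds $\deg f$ and $\deg g$ identically, so the maximum of the three degrees is at most $\rad(fgh)-1$, as claimed.
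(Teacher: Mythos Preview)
Your argument is correct and is precisely the standard Wronskian proof due to Snyder. The paper does not supply its own proof of this theorem but simply cites \cite{Snyder}, so your write-up is in fact the very argument the paper is pointing to.
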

An elementary proof can be found in \cite{Snyder}. There is also a version of this theorem for homogeneous polynomials in two variables. For clairity we will prove how it can be deduced from Theorem \ref{thm:Mason1var}.

\begin{theorem}\label{thm:Mason2var}
 Let $K$ be a field, and let $f, g$ and $h$ be homogeneous polynomials of degree $d$ in $K[x,y]$ such that
 \begin{itemize}
  \item $f,g$ and $h$ are pairwise relatively prime,
  \item $f'_x, f'_y, g'_x, g'_y, h'_x$ and $h'_y$ are not all zero,
  \item $f+g+h=0$.
 \end{itemize}
 Then $d \le \rad(fgh)-2$.
\end{theorem}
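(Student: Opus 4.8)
The plan is to deduce the homogeneous statement from the one-variable Mason--Stothers theorem (Theorem~\ref{thm:Mason1var}) by dehomogenizing, after two normalizations that let the bound improve from $-1$ to $-2$. The first move is to reduce to the case where $K$ is algebraically closed. For any extension $L\supseteq K$ and any homogeneous $\phi\in K[x,y]$ one has $\rad_L(\phi)\le\rad_K(\phi)$ (writing $\rad_L$ for the radical degree computed over $L$), since each $K$-irreducible factor of degree $e$ can split over $L$ only into distinct irreducibles of total degree $\le e$, with equality unless it is inseparable. All three hypotheses --- pairwise coprimeness, the relation $f+g+h=0$, and ``the six partials are not all zero'' --- persist over $L$, because each is a statement about polynomial identities/common factors that is insensitive to the base field. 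Hence it suffices to prove $d\le\rad_{\bar K}(fgh)-2$ over $\bar K$, as then $d\le\rad_{\bar K}(fgh)-2\le\rad_K(fgh)-2$.

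So assume $K=\bar K$. Since $f,g,h$ are pairwise coprime of degree $d\ge 1$, each contributes at least one linear factor and these are distinct, so $fgh$ has at least three distinct linear factors. As $\mathrm{PGL}_2$ is transitive on triples of points of $\mathbb{P}^1$, I can choose coordinates in which both $x\mid fgh$ and $y\mid fgh$. Such a linear change preserves degrees, coprimeness, the relation, the value of $\rad$, and the condition ``the six partials are not all zero'' --- this last being the $\mathrm{GL}_2$-invariant statement that $f,g,h$ are not all $p$-th powers. Because not all six partials vanish, either $f'_x,g'_x,h'_x$ are not all zero or $f'_y,g'_y,h'_y$ are not all zero; using the symmetry between $x$ and $y$ (legitimate now that both variables divide $fgh$), assume the former.

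Now set $\tilde f(x)=f(x,1)$ and likewise $\tilde g,\tilde h$. They are pairwise coprime in $\bar K[x]$, since a nonconstant common factor would homogenize to a common homogeneous factor of $f,g,h$; they satisfy $\tilde f+\tilde g+\tilde h=0$; and since $\tilde f'=f'_x(x,1)$ vanishes identically iff the homogeneous $f'_x$ does, the derivatives $\tilde f',\tilde g',\tilde h'$ are not all zero. One-variable Mason--Stothers then gives $\max(\deg\tilde f,\deg\tilde g,\deg\tilde h)\le\rad(\tilde f\tilde g\tilde h)-1$. The conclusion is bookkeeping: at most one of $f,g,h$ is divisible by $y$, so two of the dehomogenizations retain degree $d$ and the left side is $d$; and $\rad(\tilde\phi)=\rad(\phi)$ unless $y\mid\phi$, in which case it drops by $1$, so $\rad(\tilde f\tilde g\tilde h)=\rad(fgh)-\#\{\phi\in\{f,g,h\}:y\mid\phi\}$. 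Since we arranged $y\mid fgh$, exactly one of them is divisible by $y$, giving $d\le(\rad(fgh)-1)-1=\rad(fgh)-2$.

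The main obstacle is the interaction between the characteristic-$p$ derivative hypothesis and the dehomogenization: one must dehomogenize in a direction whose partials do not all vanish while having \emph{that same} variable divide $fgh$ in order to harvest the extra $-1$, which is precisely why I arrange both $x\mid fgh$ and $y\mid fgh$ before selecting the chart. A secondary point needing care is the radical-under-extension inequality $\rad_{\bar K}\le\rad_K$, including the inseparable case, which is what makes the reduction to $\bar K$ --- and thereby the availability of rational linear factors to move --- legitimate.
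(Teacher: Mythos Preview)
Your proof is correct and follows essentially the same route as the paper: extend the base field so that linear factors are available, change coordinates so that $y$ divides one of the three polynomials, dehomogenize by setting $y=1$, and apply the one-variable Mason--Stothers theorem to pick up the extra $-1$. You are more careful on one point the paper passes over in silence---the derivative hypothesis of Theorem~\ref{thm:Mason1var}---which you secure by arranging \emph{both} $x\mid fgh$ and $y\mid fgh$ so that the $x\leftrightarrow y$ symmetry lets you dehomogenize in whichever direction has a surviving partial; the paper simply invokes the one-variable theorem without checking this (it is in fact automatic under the hypotheses, since $f'_x=g'_x=h'_x=0$ forces either $y\mid f,g,h$ or all six partials to vanish, but that needs a short separate argument).
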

\begin{proof}
 Let $K'$ be the splitting field of $f$. Over this field $f$ can be factorized as follows
 \[
  f(x,y) = \sum_{i=0}^d \alpha_ix^iy^{d-i} = y^d \sum_{i=0}^d \alpha_i \Big( \frac{x}{y} \Big)^{\!\! i}=y^d \prod_{j=1}^d\Big(u_j\frac{x}{y}- v_j \Big) = \prod_{j=1}^d(u_ix-v_j y),
 \]
where the $\alpha_i, u_j$ and $v_j$'s are elements in $K'$. After a possible linear change of variables, we can assume that $f(x,y)= y^m\prod_{j=1}^{d-m}(r_jx-s_jy)$, where $m\ge 1$. Let $\hat{f}(x)=f(x,1)=\prod_{j=1}^{d-m}(r_jx-s_j)$, $\hat{g}(x)=g(x,1)$ and $\hat{h}(x)=h(x,1)$. Then $\rad(\hat{f})=\rad(f)-1$, while $\rad(g)=\rad(\hat{g})$ and $\rad(h)=\rad(\hat{h})$. Note also that $\deg(\hat{g})=d$. By Theorem \ref{thm:Mason1var} it now follows that
\[
 d=\deg(\hat{g})\!\le \rad(\hat{f}\hat{g}\hat{h})-1=\rad(\hat{f})+\rad(\hat{g})+\rad(\hat{h})-1 = \rad(f)+\rad(g)+\rad(h)-2=\rad(fgh)-2,
\]
which we wanted to prove.
\end{proof}

\subsection{The syzygy gap}\label{subsec:syz-gap}

Let now $R=K[x,y]$, where $K$ is any field. Let $f_1,f_2$ and $f_3$ be non-zero homogeneous, pairwise relatively prime, polynomials in $R$, with $d_i=\deg(f_i)$, and let $I=(f_1,f_2,f_3)$. The $R$-module $R/I$ has a free resolution of length 2, by Hilbert's syzygy theorem. If $\{f_1, f_2, f_3\}$ is a minimal set of generators of $I$, then 
\begin{equation}\label{resolution}
 0 \to \ker \phi \to R^3 \overset{\phi}{\to}R \to  R/I \to 0,
\end{equation}
where $\phi$ is given by the matrix $\begin{pmatrix} f_1 & f_2 & f_3 \end{pmatrix}$, is an exact sequence of free modules. We have $\rank \ker \phi =3-1=2$. That is, $\ker \phi = \Syz(f_1,f_2,f_3)$ is generated by two homogeneous elements. If $\{f_1, f_2, f_3\}$ is not a minimal set of generators of $I$, we have e. g. $f_3=g_1f_1+g_2f_2$, for some homogeneous polynomials $g_1$ and $g_2$. Then every relation $Af_1+Bf_2+Cf_3=0$ can be written as $(A+Cg_1)f_1+(B+Cg_2)f_2=0$. Since $f_1$ and $f_2$ are relatively prime $A+Cg_1=hf_2$, and $B+Cg_2=-hf_1$, for some homogeneous $h$. It follows that $\ker \phi$ is generated by $ (f_2, -f_1, 0)$ and $(g_1,g_2,-1)$. This shows that (\ref{resolution}) is always a free resolution (but not necessarily minimal), and $\ker \phi$ is generated by two homogeneous elements of degrees, say  $\alpha$ and $\beta$. We have a graded resolution  
\[
 0 \to R(-\alpha) \oplus R(-\beta) \to R(-d_1) \oplus R(-d_2) \oplus R(-d_3) \to R \to R/I \to 0,
\]
of $R/I$. Define $\Delta(f_1,f_2,f_3)=|\alpha-\beta|$. This is the \emph{syzygy gap function} introduced in \cite{Monsky}. From the graded resolution we see that the Hilbert series of $R/I$ is 
\[
 \HS_{R/I}(t)=\frac{1-t^{d_1}-t^{d_2}-t^{d_3}+t^\alpha+t^\beta}{(1-t)^2}.
\]
We also know that $R/I$ has dimension 0, thus the Hilbert series is a polynomial, say $\HS_{R/I}(t)=p(t)$. Then 
\[
 (1-t)^2p(t)=1-t^{d_1}-t^{d_2}-t^{d_3}+t^\alpha+t^\beta.
\]
By taking the derivative of both sides, and substituting $t=1$ we get $0=-d_1-d_2-d_3+\alpha+\beta$, that is $\alpha+\beta=d_1+d_2+d_3$.  
This is one of the so called Herzog-Kühl equations, see e. g. \cite{Progress}. From this follows also the below lemma.

\begin{lemma}\label{lemma:delta_parity}
Let $f_1,f_2$ and $f_3$ be non-zero, pairwise relatively prime homogeneous polynomials in $K[x,y]$, with $d_i=\deg(f_i)$. Then
 $\Delta(f_1,f_2,f_3) \equiv d_1+d_2+d_3 \mod 2$.
\end{lemma}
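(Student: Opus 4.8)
The plan is to read the result directly off the Herzog--Kühl relation established immediately above the lemma, namely $\alpha+\beta=d_1+d_2+d_3$, where $\alpha$ and $\beta$ denote the degrees of the two homogeneous generators of $\Syz(f_1,f_2,f_3)$. The hypotheses of the lemma (non-zero, pairwise relatively prime, homogeneous) are exactly those under which the graded resolution (\ref{resolution}) and the subsequent Hilbert-series computation were carried out, so this relation is available with no further work, and I may invoke it whether or not $\{f_1,f_2,f_3\}$ happens to be a minimal generating set.

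The single observation needed is a parity comparison. By definition $\Delta(f_1,f_2,f_3)=|\alpha-\beta|$. Taking an absolute value does not change the parity of an integer, so $|\alpha-\beta|\equiv \alpha-\beta \pmod 2$; and since $-\beta\equiv\beta\pmod 2$, we also have $\alpha-\beta\equiv\alpha+\beta\pmod 2$. Chaining these congruences with the Herzog--Kühl relation yields
\[
\Delta(f_1,f_2,f_3)=|\alpha-\beta|\equiv\alpha+\beta=d_1+d_2+d_3 \pmod 2,
\]
which is precisely the assertion.

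There is essentially no obstacle here: all the content lives in the identity $\alpha+\beta=d_1+d_2+d_3$, which has already been proved, and the remainder is the elementary fact that $a$, $-a$, and $|a|$ all share the same parity. The only point deserving a moment's care is checking that one is entitled to use the identity in the stated generality; but the derivation preceding the lemma handles both the minimal and the non-minimal cases for the generators of $\ker\phi$, so no case distinction is required.
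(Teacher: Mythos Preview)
Your argument is correct and matches the paper's approach exactly: the paper states the lemma immediately after deriving $\alpha+\beta=d_1+d_2+d_3$ and simply remarks that the lemma follows from this identity, which is precisely the parity comparison you spell out.
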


We shall also see some other properties of the function $\Delta$. 

\begin{lemma}\label{lemma:delta_p^s}
 Let $K$ be a field of characteristic $p>0$, and let $f_1,f_2$ and $f_3$ be non-zero, pairwise relatively prime homogeneous polynomials in $K[x,y]$. Then
 \[\Delta(f_1^{p^s},f_2^{p^s},f_3^{p^s})=p^s\Delta(f_1,f_2,f_3),\] 
 for all non-negative integers $s$.
\end{lemma}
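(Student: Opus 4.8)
The plan is to exploit the Frobenius endomorphism together with the structural fact, recorded just before the lemma, that the syzygy module of three non-zero pairwise coprime homogeneous forms is free of rank two, with homogeneous generator degrees $\alpha \le \beta$ satisfying $\alpha+\beta = d_1+d_2+d_3$ and $\Delta = \beta-\alpha$. Since the three powers $f_1^{p^s}, f_2^{p^s}, f_3^{p^s}$ are again non-zero, pairwise coprime, and homogeneous, the same structure applies to them.

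First I would fix homogeneous minimal generators $u=(u_1,u_2,u_3)$ and $v=(v_1,v_2,v_3)$ of $\Syz(f_1,f_2,f_3)$, of degrees $\alpha$ and $\beta$. Because in characteristic $p$ the $p^s$-th power map is additive, applying it to $u_1f_1+u_2f_2+u_3f_3=0$ gives $u_1^{p^s}f_1^{p^s}+u_2^{p^s}f_2^{p^s}+u_3^{p^s}f_3^{p^s}=0$; thus $u^{[p^s]}:=(u_1^{p^s},u_2^{p^s},u_3^{p^s})$ is a homogeneous syzygy of $(f_1^{p^s},f_2^{p^s},f_3^{p^s})$ of degree $p^s\alpha$, and similarly $v^{[p^s]}$ of degree $p^s\beta$. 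Both lie in $M^{[p^s]}:=\Syz(f_1^{p^s},f_2^{p^s},f_3^{p^s})$, which is free of rank two with generator degrees $\alpha',\beta'$ whose sum, by the Herzog-Kühl relation for the powers, is $p^s(d_1+d_2+d_3)=p^s(\alpha+\beta)$.

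The heart of the argument is to show that $u^{[p^s]}$ and $v^{[p^s]}$ already form a basis of $M^{[p^s]}$. Their $R$-linear independence comes from the fact that, since $u,v$ span a rank-two module, the matrix $[\,u\mid v\,]$ has a nonzero $2\times2$ minor, say $u_1v_2-u_2v_1\ne 0$; its $p^s$-th power $u_1^{p^s}v_2^{p^s}-u_2^{p^s}v_1^{p^s}=(u_1v_2-u_2v_1)^{p^s}$ is again nonzero because $R=K[x,y]$ is a domain, and this minor witnesses the independence of $u^{[p^s]},v^{[p^s]}$. Hence $N:=R\,u^{[p^s]}+R\,v^{[p^s]}$ is a free rank-two graded submodule of $M^{[p^s]}$, and the inclusion is given by a $2\times2$ matrix $T$ of homogeneous entries. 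A degree count yields $\deg(\det T)=(p^s\alpha+p^s\beta)-(\alpha'+\beta')=0$, while $\det T\ne 0$ by rank-two injectivity; so $\det T$ is a nonzero constant, $T$ is invertible over $R$, and $N=M^{[p^s]}$. Therefore $u^{[p^s]},v^{[p^s]}$ are minimal generators, so $\{\alpha',\beta'\}=\{p^s\alpha,p^s\beta\}$ and $\Delta(f_1^{p^s},f_2^{p^s},f_3^{p^s})=p^s(\beta-\alpha)=p^s\Delta(f_1,f_2,f_3)$.

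The step I expect to be the main obstacle is precisely the claim that the Frobenius-twisted generators span all of $M^{[p^s]}$ and not merely a proper submodule, since a priori the powers could acquire extra low-degree syzygies. The nonvanishing-minor observation, combined with the fact that $\deg(\det T)=0$, is exactly what excludes this, by forcing the comparison matrix to be a unit. I would be careful to verify that coprimality is preserved under taking $p^s$-th powers (so that $M^{[p^s]}$ is free of rank two and the Herzog-Kühl relation is available), and to invoke the uniqueness of the generator degrees of a graded free module in order to read off $\{\alpha',\beta'\}$.
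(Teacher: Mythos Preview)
Your argument is correct, and it is genuinely different from both proofs the paper provides. The paper's first proof invokes the Frobenius functor and Kunz's theorem on its exactness to conclude that the Frobenius-twisted syzygies $(A_i^{p^s})$ and $(B_i^{p^s})$ generate $\Syz(f_1^{p^s},f_2^{p^s},f_3^{p^s})$; the second proof argues directly, by differentiating the syzygy relations and analysing divisibility by $x^p$ and $y^p$, that the minimal generators of the new syzygy module must already be polynomials in $x^{p^s},y^{p^s}$. Your route reaches the same conclusion as the first proof but replaces Kunz's exactness theorem with an elementary determinant calculation: the Herzog--K\"uhl relation, applied once to $(f_1,f_2,f_3)$ and once to $(f_1^{p^s},f_2^{p^s},f_3^{p^s})$, forces the transition matrix $T$ between $\{u^{[p^s]},v^{[p^s]}\}$ and a chosen basis of $M^{[p^s]}$ to have $\deg(\det T)=0$, while the nonvanishing minor $(u_1v_2-u_2v_1)^{p^s}$ forces $\det T\ne 0$; hence $T$ is a unit and the twisted syzygies are already a basis.

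The trade-off is this: the paper's first proof is conceptually cleanest but imports a nontrivial theorem (flatness of Frobenius over a regular ring); the paper's second proof is fully elementary but somewhat intricate. Your argument is also fully elementary, and it is shorter than the second proof because it leverages the Herzog--K\"uhl equation that the paper has just established. In effect you are proving the special case of Kunz's theorem that is needed here by a direct rank-and-degree count, which is a nice middle ground.
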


We will give two proofs of this lemma. The first proof uses the Frobenius functor. The second proof is new, and uses elementary methods. 

\begin{proof}[Proof 1]
Let $R=K[x,y]$, and $I=(f_1,f_2,f_3)$. For a fixed $s$, let $q=p^s$. Let $\varphi$ denote the $s$:th power of the Frobenius endomorphism on $R$, that is $\varphi(a)=a^q$. Consider now $R$ as an $R$-bimodule, denoted $R^\varphi$, where left multiplication by an element $r \in R$ is usual multiplication, while right multiplication is multiplication by $\varphi(r)$. The Frobenius functor $\mathcal{F}$ is a functor on the category of left $R$-modules defined by $\mathcal{F}(M)=R^\varphi \otimes_R M$. For a more extensive review of the Frobenius functor, see e. g. \cite{Bruns-Herzog}. Two well knows properties of this functor is that $\mathcal{F}(R^m)\cong R^m$ and $\mathcal{F}(R/I)\cong R/I^{(q)}$ where $I^{(q)}$ is the ideal generated by the $q$:th powers of the elements in $I$. One can also prove that if $\psi: R^m \to R^n$ is a homomorphism of free modules represented by a matrix $(a_{ij})$, then the induced map $\mathcal{F}(\psi)$ is represented by the matrix $(a^q_{ij})$. It follows from \cite[Corollary 2.7]{Kunz} that $\mathcal{F}$ is an exact functor. Now, suppose $\Syz(f_1,f_2,f_3)$ is generated by $(A_1,A_2, A_3)$ and $(B_1, B_2, B_3)$, of degrees $\alpha$ and $\beta$. When we apply $\mathcal{F}$ to the resolution 
\[
 0 \to R^2 
 \overset{\begin{pmatrix}
                     A_1 & B_1 \\
                     A_2 & B_2 \\
                     A_3 & B_3
                    \end{pmatrix}}{\xrightarrow{\hspace*{2cm}}} R^3  \overset{\begin{pmatrix}
                     f_1 & f_2 & f_3
                    \end{pmatrix}}{\xrightarrow{\hspace*{2.5cm}}} R \to R/I \to 0       
\]
we get an exact sequence
\[
 0 \to R^2 
 \overset{\begin{pmatrix}
                     A_1^q & B_1^q \\
                     A_2^q & B_2^q \\
                     A_3^q & B_3^q
                    \end{pmatrix}}{\xrightarrow{\hspace*{2cm}}} R^3  \overset{\begin{pmatrix}
                     f_1^q & f_2^q & f_3^q
                    \end{pmatrix}}{\xrightarrow{\hspace*{2.5cm}}} R \to R/I^{(p)} \to 0.
\]
This proves that $\Syz(f_1^q,f_2^q,f_3^q)$ is generated by $(A_1^q,A_2^q,A_3^q)$ and $(B_1^q,B_2^q,B_3^q)$. Then 
 \[
  \Delta(f_1^{q},f_2^q,f_3^{q})=|aq-bq|=q|a-b|=q\Delta(f_1,f_2,f_3),
 \]
which we wanted to prove.
\end{proof}

\begin{proof}[Proof 2]
 Suppose $\Syz(f_1,f_2,f_3)$ is generated by $(A_1,A_2, A_3)$ and $(B_1, B_2, B_3)$, of degrees $\alpha$ and $\beta$. Notice first that
\[
 A_1f_1+A_2f_2+A_3f_3=0 ~~ \iff ~~ A_1^{p^s}f_1^{p^s}+A_2^{p^s}f_2^{p^s}+A_3^{p^s}f_3^{p^s}=0,
\]
and the corresponding for $(B_1,B_2,B_3)$, since $(A_1f_1+A_2f_2+A_3f_3)^{p^s}=A_1^{p^s}f_1^{p^s}+A_2^{p^s}f_2^{p^s}+A_3^{p^s}f_3^{p^s}$. However, 
this does not prove that $\Syz(f_1^{p^s},f_2^{p^s},f_3^{p^s})$ is generated by $(A_1^{p^s},A_2^{p^s}, A_3^{p^s})$ and $(B_1^{p^s}, B_2^{p^s}, B_3^{p^s})$.

Claim: The two syzygies generating $\Syz(f_1^{p^s},f_2^{p^s},f_3^{p^s})$ consist of polynomials in $x^{p^s}$ and $y^{p^s}$. 

These two generators give two syzygies in $\Syz(f_1,f_2,f_3)$ by the change of variables $x^{p^s} \mapsto x$ and $y^{p^s} \mapsto y$. It follows that the two relations that generate $\Syz(f_1^{p^s},f_2^{p^s},f_3^{p^s})$ has degrees $\alpha p^s$ and $\beta p^s$. Then 
 \[
  \Delta(f_1^{p^s},f_2^{p^s},f_3^{p^s})=|ap^s-bp^s|=p^s|a-b|=p^s\Delta(f_1,f_2,f_3),
 \]
which we wanted to prove.

It remains to prove the claim. We will first prove that the two syzygies consist of polynomials in $x^p$ and $y^p$. Assume that
\begin{equation}\label{eq:delta_p^s:syz}
 C_1f_1^{p^s}+C_2f_2^{p^s}+C_3f_3^{p^s}=0 ~~ \textrm{and} ~~ D_1f_1^{p^s}+D_2f_2^{p^s}+D_3f_3^{p^s}=0
\end{equation}
are the two relations that generate $\Syz(f_1^{p^s},f_2^{p^s},f_3^{p^s})$. We may assume that $(C_1,C_2,C_3)$ is the one of lowest degree. Let $f'$ denote the derivative of $f$ w. r. t. $x$. Notice that 
\[
(Cf^{p^s})'=C'f^{p^s}+p^sCf^{p^s-1}f'=C'f^{p^s},
\]
and hence
\[
 C_1'f_1^{p^s}+C_2'f_2^{p^s}+C_3'f_3^{p^s}=0. 
\]
But there can not be a relation of lower degree than $(C_1,C_2,C_3)$, thus $C_i'=0$ for $i=1,2,3$. The same holds when we take the derivative w. r. t. $y$. This means that $C_1, C_2, C_3$ are polynomials in $x^p$ and $y^p$. 

If we derivate the other relation w. r. t. $x$ we get 
\[
 D_1'f_1^{p^s}+D_2'f_2^{p^s}+D_3'f_3^{p^s}=0.
\]
Here we can not conclude that all $D_i'=0$, because there are elements of lower degree, namely those generated by $(C_1,C_2,C_3)$. Suppose there is a $g$ such that $D_i'=gC_i$, for $i=1,2,3$. Recall that $C_i'=0$. This gives $D_i=GC_i+\widehat D_i$, where $G$ is some homogeneous polynomial such that $G'=g$, and $\widehat D_i'=0$. We can assume that $(\widehat D_1, \widehat D_2, \widehat D_3)$ is not a multiple of $(C_1,C_2,C_3)$, because that would imply that $(D_1,D_2,D_3)$ is also a multiple of $(C_1,C_2,C_3)$. Therefore we can replace $(D_1,D_2,D_3)$ by $(\widehat D_1, \widehat D_2, \widehat D_3)$. Since $\widehat D_i'=0$, $\widehat D_i$ is a homogeneous polynomial in $x^p$ and $y$. Suppose $\widehat D_i$ has degree $m_ip+r_i$, where $0 \le r_i <p$. Then the terms in $\widehat D_i$ looks like $cx^{\alpha_ip}y^{(m_i-\alpha_i)p+r_i}$, which means that $\widehat D_i$ is divisible by $y^{r_i}$. Also $\deg(\widehat D_if_i^{p^s})=m_ip+d_ip^s+r_i$. Since 
\[
 \deg(\widehat D_1f_1^{p^s})=\deg(\widehat D_2f_2^{p^s})=\deg(\widehat D_3f_3^{p^s})
\]
$r_1=r_2=r_3$. If $r_1>0$ we can divide $\widehat D_1, \widehat D_2$ and $\widehat D_3$ by $y^{r_1}$ and get a syzygy of lower degree, which is not a multiple of $(C_1,C_2,C_3)$. But this is a contradiction, so $r_1=r_2=r_3=0$. This means that $\widehat D_1, \widehat D_2$ and $\widehat D_3$ also are polynomials in $x^p$ and $y^p$. A change of variables $x^p \mapsto x$ and $y^p \mapsto y$ in (\ref{eq:delta_p^s:syz}) gives two elements in $\Syz(f_1^{p^{s-1}},f_2^{p^{s-1}},f_3^{p^{s-1}})$. The claim now follows by induction. 
\end{proof}

 Let us now investigate what happens with $\Delta(f_1,f_2,f_3)$ when, for example, $f_1$ is replaced by $\ell f_1$, for some linear form $\ell$. By Lemma \ref{lemma:delta_parity}, $\Delta(f_1,f_2,f_3)$ and $\Delta(\ell f_1,f_2,f_3)$ has different parity, so they can not be equal. If we have a relation $A_1f_1+A_2f_2+A_3f_3=0$, we also get a relation on $\ell f_1, f_2, f_3$ by multiplying the expression by $\ell$. This means that the two elements that generates $\Syz(\ell f_1,f_2,f_3)$ can have degrees at most $\alpha+1$ and $\beta+1$. On the other hand, a relation $A_1\ell f_1+A_2f_2+A_3f_3=0$ on $\ell f_1, f_2, f_3$ can also be considered a syzygy $(A_1\ell, A_2, A_3)$ on $f_1,f_2,f_3$. Hence, the two generators of $\Syz(\ell f_1,f_2,f_3)$ have degrees at least $\alpha$ and $\beta$. This shows that $\Delta$ must either increase of decrease by 1 when $f_1$ is replaced by $\ell f_1$. We summarize this in a lemma. 
 
 \begin{lemma}\label{lemma:delta_plusminus1}
 Let $f_1,f_2$ and $f_3$ be non-zero, pairwise relatively prime homogeneous polynomials in $K[x,y]$. Let $\ell$ be a linear form, relatively prime to $f_2$ and $f_3$. Then
 \[\Delta(\ell f_1,f_2,f_3)=\Delta(f_1,f_2,f_3) \pm 1.\]
 \end{lemma}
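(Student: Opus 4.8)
The plan is to track how the two generator degrees of the syzygy module change when $f_1$ is replaced by $\ell f_1$, and then to pin down the change using the relation $\alpha+\beta=d_1+d_2+d_3$ established above. Write $\alpha \le \beta$ for the degrees of the two generators of $\Syz(f_1,f_2,f_3)$, so that $\Delta(f_1,f_2,f_3)=\beta-\alpha$, and write $\alpha'\le\beta'$ for those of $\Syz(\ell f_1,f_2,f_3)$, so that $\Delta(\ell f_1,f_2,f_3)=\beta'-\alpha'$. The hypothesis that $\ell$ is relatively prime to $f_2$ and $f_3$ guarantees that $\ell f_1,f_2,f_3$ are again pairwise relatively prime, so both modules are free of rank $2$ and these invariants are defined. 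Since the smaller generator degree of a rank-$2$ graded free module is exactly its \emph{initial degree} — the least degree of a nonzero homogeneous element — it suffices to sandwich $\alpha'$ between $\alpha$ and $\alpha+1$ and then let the sum relation determine $\beta'$.

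First I would construct two degree-controlled injective maps between the two syzygy modules. Multiplying a relation $A_1f_1+A_2f_2+A_3f_3=0$ through by $\ell$ and regrouping it as $A_1(\ell f_1)+(\ell A_2)f_2+(\ell A_3)f_3=0$ defines an injective homomorphism $\Syz(f_1,f_2,f_3)\to\Syz(\ell f_1,f_2,f_3)$ that raises degree by $1$; conversely, reading a relation $B_1(\ell f_1)+B_2f_2+B_3f_3=0$ as the relation $(\ell B_1)f_1+B_2f_2+B_3f_3=0$ defines an injective, degree-preserving homomorphism $\Syz(\ell f_1,f_2,f_3)\to\Syz(f_1,f_2,f_3)$. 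The first map sends the degree-$\alpha$ generator to a nonzero element of degree $\alpha+1$, forcing the initial degree to satisfy $\alpha'\le\alpha+1$; the second sends a nonzero degree-$\alpha'$ element into $\Syz(f_1,f_2,f_3)$, forcing $\alpha\le\alpha'$. Hence $\alpha'\in\{\alpha,\alpha+1\}$.

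Finally I would invoke the relation $\alpha+\beta=d_1+d_2+d_3$ for each module, which for the second gives $\alpha'+\beta'=(d_1+1)+d_2+d_3=\alpha+\beta+1$. Combined with $\alpha'\in\{\alpha,\alpha+1\}$, this forces either $(\alpha',\beta')=(\alpha,\beta+1)$ or $(\alpha',\beta')=(\alpha+1,\beta)$, and in these two cases $\Delta(\ell f_1,f_2,f_3)=\Delta(f_1,f_2,f_3)+1$ and $\Delta(\ell f_1,f_2,f_3)=\Delta(f_1,f_2,f_3)-1$ respectively; Lemma~\ref{lemma:delta_parity} gives an independent consistency check, since it already forces the two gaps to have opposite parity. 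The main obstacle is the clean bookkeeping: verifying that the two maps are genuinely injective and shift degrees by exactly the claimed amounts, and identifying the smaller generator degree with the initial degree. Once that is in place the sum relation does the rest, and in particular it handles the edge case $\Delta=0$, where only $\alpha'=\alpha$ is compatible with $\alpha'\le\beta'$, so that $\Delta(\ell f_1,f_2,f_3)=1$.
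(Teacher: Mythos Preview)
Your argument is correct and follows essentially the same route as the paper: both construct the same pair of injective maps between $\Syz(f_1,f_2,f_3)$ and $\Syz(\ell f_1,f_2,f_3)$ to sandwich the generator degrees. The only cosmetic difference is the finishing step --- the paper bounds both $\alpha'$ and $\beta'$ and then invokes the parity Lemma~\ref{lemma:delta_parity} to rule out $\Delta'=\Delta$, whereas you bound only $\alpha'$ and let the Herzog--K\"uhl relation $\alpha'+\beta'=\alpha+\beta+1$ determine $\beta'$; both are one-line consequences of facts already established.
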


We shall look more carefully into two special cases where Lemma \ref{lemma:delta_plusminus1} applies. Let $(A_1, A_2, A_3)$ be the element in $\Syz(f_1,f_2,f_3)$ of the lowest degree $\alpha$. If $\ell | A_1$ then $(\ell^{-1}A_1 ,A_2,A_3)$ is a syzygy of $\ell f_1,f_2,f_3$ of degree $\alpha$. The other generating syzygy can have degree $\beta$ or $\beta+1$, as we saw above. But since $\Delta(\ell f_1,f_2,f_3)\ne \Delta(f_1,f_2,f_3)$ it must have degree $\beta+1$. Hence, $\Delta(\ell f_1,f_2,f_3)=\Delta(f_1,f_2,f_3) + 1$ in this case.

It follows also from Lemma \ref{lemma:delta_plusminus1} that $\Delta(\ell^{-1} f_1,f_2,f_3)=\Delta(f_1,f_2,f_3) \pm 1$, if $\ell | f_1$. If, in addition, $\ell | A_2$, it follows from the equality $A_1f_1+A_2f_2+A_3f_3=0$ that $\ell$ also divides $A_3$. Then we can divide the whole expression by $\ell$, and get a syzygy $(A_1,\ell^{-1}A_2, \ell^{-1}A_3)$ on $\ell^{-1}f_1, f_2, f_3$, of degree $\alpha-1$. We see that we must have $\Delta(\ell^{-1} f_1,f_2,f_3)=\Delta(f_1,f_2,f_3) + 1$, in this case. 

This, together with Theorem \ref{thm:Mason2var}, can now be used to prove the following proposition. 

\begin{prop}[{\cite[Theorem 8]{Monsky}}]\label{thm8monsky}
 Let $K$ be a field of characteristic $p>0$. Let $f_1, f_2,$ and $f_3$ be homogeneous relatively prime polynomials in $K[x,y]$. Assume there is a linear form $\ell$ such that $f_1=\ell^m h$, where $l \nmid h$ and $p \nmid m$. Assume also that $\Delta(f_1,f_2,f_3)$ decreases when $f_1$ is replaced by $\ell f_1$ or $\ell^{-1}f_1$. Then $\Delta(f_1,f_2,f_3) \le \rad(f_1f_2f_3)-2$.
\end{prop}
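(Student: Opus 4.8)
The conclusion $\Delta(f_1,f_2,f_3)\le\rad(f_1f_2f_3)-2$ has exactly the shape of Mason--Stothers (Theorem \ref{thm:Mason2var}), so the plan is to produce three pairwise coprime forms that sum to zero, have a common degree tied to $\Delta$, and whose product has radical controlled by $\rad(f_1f_2f_3)$. Syzygies are the natural source of such relations: any homogeneous syzygy $(C_1,C_2,C_3)$ gives $C_1f_1+C_2f_2+C_3f_3=0$ with the three summands $C_if_i$ of one common degree. Writing $\alpha\le\beta$ for the two generating syzygy degrees, the relation $\alpha+\beta=d_1+d_2+d_3$ established before Lemma \ref{lemma:delta_parity} gives $\Delta=2\beta-(d_1+d_2+d_3)$; hence an upper bound on $\Delta$ is the same as an upper bound on the \emph{larger} degree $\beta$, and I would feed Mason--Stothers a syzygy of top degree $\beta$. (Using the minimal syzygy $(A_1,A_2,A_3)$ of the preceding discussion would instead bound $\alpha$, hence $\Delta$, from below, which is the wrong direction.) First I would translate the hypotheses: by Lemma \ref{lemma:delta_plusminus1} and the two paragraphs before this proposition, ``$\Delta$ decreases under $f_1\mapsto\ell f_1$'' forces $\ell\nmid A_1$ and ``$\Delta$ decreases under $f_1\mapsto\ell^{-1}f_1$'' forces $\ell\nmid A_2$; with $\ell\mid f_1$ and $A_1f_1+A_2f_2+A_3f_3=0$ this also gives $\ell\nmid A_3$.

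Next I would choose a syzygy $(C_1,C_2,C_3)$ of degree $\beta$ with $\gcd(C_1,C_2,C_3)=1$ and all entries nonzero; such a choice exists because the degree-$\beta$ syzygies form a space of dimension $\Delta+2\ge 2$, so a generic one avoids the loci $C_i=0$. Set $w_i=C_if_i$ and $D=\gcd(w_1,w_2,w_3)$. Since $w_1+w_2+w_3=0$, each pairwise gcd divides the third summand and hence equals $D$, so the reduced forms $\bar w_i=w_i/D$ are automatically pairwise coprime, of common degree $\beta-\deg D$. To invoke Theorem \ref{thm:Mason2var} I must check that $\bar w_1,\bar w_2,\bar w_3$ are not all polynomials in $x^p$ and $y^p$. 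This is where $p\nmid m$ is essential: if all three $\bar w_i$ lay in $K[x^p,y^p]$, the change of variables $x^p\mapsto x$, $y^p\mapsto y$ from the proof of Lemma \ref{lemma:delta_p^s} would exhibit the relation as a $p$-th power, which is incompatible with the factor $\ell^m$ of $f_1$ satisfying $p\nmid m$ (after moving $\ell$ to $x$, the form $f_1$ carries $x$ to the exponent $m$, and $p\nmid m$). The valuation facts $\ell\nmid A_i$ should be what pins down where this surviving $\ell$-power sits among the $\bar w_i$.

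Granting these checks, Mason--Stothers yields $\beta-\deg D\le\rad(\bar w_1\bar w_2\bar w_3)-2$, and therefore
\[
\Delta=2\beta-(d_1+d_2+d_3)\le 2\deg D+2\rad(\bar w_1\bar w_2\bar w_3)-(d_1+d_2+d_3)-4.
\]
The theorem would then follow from the accounting inequality
\[
2\deg D+2\rad(\bar w_1\bar w_2\bar w_3)\le\rad(f_1f_2f_3)+(d_1+d_2+d_3)+2,
\]
which I would attack prime by prime: every irreducible factor of $D$ divides $f_1f_2f_3$ (because $\gcd(C_1,C_2,C_3)=1$), while each factor of $C_1C_2C_3$ not dividing $f_1f_2f_3$ that survives into $\rad(\bar w_1\bar w_2\bar w_3)$ must be paid for by $\deg D$ and the $d_i$. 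This last count is tight in examples, so I expect it --- together with the rigorous verification of the nonvanishing-derivative hypothesis --- to be the main obstacle. I would also have to dispose of the degenerate situation in which every top-degree syzygy has a zero entry (so that $\beta=d_i+d_j$ and the Koszul syzygy is the only generator), where a slightly different representative or a direct argument is needed.
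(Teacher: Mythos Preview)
Your central strategic choice is backwards: the paper applies Mason--Stothers to the \emph{minimal} syzygy $(A_1,A_2,A_3)$ of degree $\alpha$, and this yields an \emph{upper} bound on $\Delta$, not a lower one. The reason your intuition fails is that the right-hand side of Mason--Stothers also depends on the syzygy degree, and grows faster than the left. Concretely, with $M=\gcd(A_1f_1,A_2f_2,A_3f_3)$ one has
\[
\rad\!\Big(\frac{A_1f_1A_2f_2A_3f_3}{M^3}\Big)\le \rad(f_1f_2f_3)+\sum_i\deg A_i-2\deg M
=\rad(f_1f_2f_3)+(2\alpha-\beta)-2\deg M,
\]
since $\sum_i\deg A_i=3\alpha-(d_1+d_2+d_3)=2\alpha-\beta$. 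Feeding this into $\alpha-\deg M\le\rad(\ldots)-2$ gives, after cancellation, $\beta-\alpha\le\rad(f_1f_2f_3)-\deg M-2$, which is exactly the desired bound. If you run the identical computation with a degree-$\beta$ syzygy you obtain instead $\alpha-\beta\le\rad(f_1f_2f_3)-\deg D-2$, the vacuous inequality $-\Delta\le\rad(f_1f_2f_3)-2$. Your ``accounting inequality'' $2\deg D+2\rad(\bar w_1\bar w_2\bar w_3)\le\rad(f_1f_2f_3)+(d_1+d_2+d_3)+2$ is therefore not an innocent bookkeeping step but is equivalent to the full conclusion, and it is false in general: for $f_i=x^p,y^p,(x+y)^p$ a generic degree-$\beta$ syzygy has $D=1$ and $\rad(\bar w_1\bar w_2\bar w_3)$ of order $3p$, while the right-hand side is $3p+5$.

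There is a second problem tied to the first. The hypotheses of the proposition translate, via the paragraphs preceding it, into $\ell\nmid A_1$, $\ell\nmid A_2$, $\ell\nmid A_3$ for the \emph{minimal} generator; they say nothing about a generic degree-$\beta$ syzygy $(C_1,C_2,C_3)$. In the paper these facts are used twice: first to see that $\ell\nmid M$ (so the full factor $\ell^m$ survives in $A_1f_1/M$), and then, using $p\nmid m$, to check directly that $(A_1f_1/M)'_x\ne 0$, which is the nonvanishing-derivative input to Theorem~\ref{thm:Mason2var}. Your sketch of the derivative check appeals to $\ell\nmid A_i$ while working with $(C_1,C_2,C_3)$, which is a mismatch. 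The fix for both issues is simply to work with the minimal syzygy throughout.
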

\begin{proof}
  Let $(A_1,A_2,A_3)$ be one of the two generators of $\Syz(f_1,f_2,f_3)$ of minimal degree $\alpha$. We saw above that if $\ell | A_1$ then $\Delta(\ell f_1,f_2,f_3)=\Delta(f_1,f_2,f_3) + 1$. We also saw that if $\ell |A_2$ then $\Delta(\ell^{-1} f_1,f_2,f_3)=\Delta(f_1,f_2,f_3) + 1$. The same holds if $\ell | A_3$. By assumption, none of this is the case, and hence $A_1, A_2$ and $A_3$ are not divisible $\ell$. Let $M=\gcd(A_1f_1,A_2f_2,A_3f_3)$. Then 
 \[
  \frac{A_1f_1}{M}+\frac{A_2f_2}{M}+\frac{A_3f_3}{M}=0
 \]
and the three terms $A_if_i/M$ are relatively prime. Notice that every irreducible factor of $M$ must divide one of $f_1, f_2$ or $f_3$. Also $\ell$ does not divide $M$, since $\ell$ does not divide $A_2,A_3,f_2$ or $f_3$. We shall now see that the formal derivative of $A_1f_1/M$ w. r. t. $x$ or $y$ is non-zero, so that we can use Theorem \ref{thm:Mason2var}. One of $\ell'_x$ and $\ell'_y$ must be non-zero, otherwise $\ell=0$. Say that $\ell'_x=c \ne 0$. Then
\[
 \Big(\frac{A_1f_1}{M} \Big)'_x =  \Big(\ell^m\frac{A_1h}{M} \Big)'_x = mc\ell^{m-1} \frac{A_1h}{M} + \ell^m  \Big(\frac{A_1h}{M} \Big)'_x.
\]
The two terms can not cancel each other, and the first one is non-zero, since $m \ne 0$ in $K$. Hence $(A_1f_1/M)'_x \ne 0$. By Theorem \ref{thm:Mason2var} 
\begin{equation}\label{eq:thm8monsky}
 \deg\Big( \frac{A_1f_1}{M} \Big) \le \rad\Big(\frac{A_1f_1A_2f_2A_3f_3}{M^3}\Big)-2.
\end{equation}
We know that $\deg(A_1f_1/M)=\alpha-\deg(M)$. Let $d_i=\deg(f_i)$, for $i=1,2,3$, and recall that $d_1+d_2+d_3=\alpha+\beta$, where $\beta$ is the degree of the other generator of $\Syz(f_1,f_2,f_3)$. We have
\begin{align*}
 \rad\!\Big(\!\frac{A_1f_1A_2f_2A_3f_3}{M^3}\!&\Big) \le \rad\Big(\frac{f_1f_2f_3}{M}\Big) + \deg\Big(\frac{A_1A_2A_3}{M^2}\Big) \\
 &\le \rad(f_1f_2f_3) + \deg(A_1)+\deg(A_2)+\deg(A_3)-2\deg(M) \\
% &=\rad(f_1f_2f_3) \!+\! \deg\!\Big(\!\frac{A_1f_1}{f_1}\!\Big)\!+\!\deg\!\Big(\!\frac{A_2f_2}{f_2}\!\Big)\!+\!\deg\!\Big(\!\frac{A_3f_3}{f_3}\!\Big)\!-\!2\deg(M) \\
 &=\rad(f_1f_2f_3) + (\alpha-d_1)+(\alpha-d_2)+(\alpha-d_3)-2\deg(M) \\
 &=\rad(f_1f_2f_3)+3\alpha-(\alpha+\beta)-2\deg(M) \\
 &= \rad(f_1f_2f_3)+2\alpha-\beta-2\deg(M) .
\end{align*}
 Inserted in (\ref{eq:thm8monsky}), this gives
\[
 \alpha-\deg(M) \le \rad(f_1f_2f_3)+2\alpha-\beta-2\deg(M)-2,
\]
which is rewritten as
\[
 \beta-\alpha \le \rad(f_1f_2f_3)-\deg(M)-2.
\]
We can now conclude that $\Delta(f_1,f_2,f_3)=\beta-\alpha\le  \rad(f_1f_2f_3)-2.$
\end{proof}

\subsection{Application of the syzygy gap function to monomial complete intersection algebras}
We will now specialize to the case $f_1=x^{d_1}$, $f_2=y^{d_2}$, and $f_3=(x+y)^{d_3}$. This is allowed, since these polynomials are pairwise relatively prime. For an easier notation we introduce a new function $\delta:\mathbb{Z}_+^3 \to \mathbb{Z}_{\ge 0}$ defined by $\delta(d_1,d_2,d_3)=\Delta(x^{d_1}, y^{d_2}, (x+y)^{d_3})$. We will now see how the theory of the syzygy gap connects to the SLP. 

\begin{prop}\label{prop:maxrang-delta}
 Let $S=K[x,y]/(x^{d_1}, y^{d_2})$. The maps $\cdot (x+y)^{d_3}: S_i \to S_{i+d_3}$, with $d_3 <d_1+d_2$, have maximal rank for all $i$ if and only if $\delta(d_1,d_2,d_3) \le 1$. 
\end{prop}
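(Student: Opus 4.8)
The plan is to detect the failure of maximal rank through the minimal syzygies of $x^{d_1},y^{d_2},(x+y)^{d_3}$. Write $R=K[x,y]$, let $\alpha\le\beta$ be the degrees of the two generators of $\Syz(x^{d_1},y^{d_2},(x+y)^{d_3})$, and recall from the preceding discussion that $\alpha+\beta=d_1+d_2+d_3=:T$ and $\delta(d_1,d_2,d_3)=\beta-\alpha$. The dictionary I would set up is this: a nonzero element of the kernel of $\cdot(x+y)^{d_3}:S_i\to S_{i+d_3}$ is represented by a form $F\in R_i$ with $F\notin(x^{d_1},y^{d_2})$ and $(x+y)^{d_3}F=Ax^{d_1}+By^{d_2}$, i.e. by a syzygy $(A,B,-F)$ of degree $i+d_3$ whose last entry is not in $(x^{d_1},y^{d_2})$; conversely, any homogeneous syzygy $(A,B,C)$ with $C\notin(x^{d_1},y^{d_2})$ yields a nonzero zero-divisor $\bar C\in S$ of degree $\deg C$. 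Since $S$ has top degree $t=d_1+d_2-2$, Proposition \ref{prop:maxrang-inj} then reduces the statement to the following: the maps have maximal rank in every degree if and only if no syzygy with $C\notin(x^{d_1},y^{d_2})$ has degree $\le T/2-1$.

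For the ``if'' direction I would argue by contraposition. If maximal rank fails, then by Proposition \ref{prop:maxrang-inj} there is a nonzero zero-divisor in some degree $i\le(t-d_3)/2$, and the dictionary produces a nonzero syzygy of degree $i+d_3\le T/2-1$. Hence the minimal syzygy degree satisfies $\alpha\le T/2-1$, and from $\alpha+\beta=T$ we get $\delta=T-2\alpha\ge 2$. Thus $\delta\le 1$ forces maximal rank; this half uses nothing beyond $\alpha\le\beta$.

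For the ``only if'' direction, assume $\delta\ge 2$, so $\alpha\le T/2-1$, and choose a minimal generator $(A,B,C)$ of the syzygy module, of degree $\alpha$. Everything comes down to showing $C\notin(x^{d_1},y^{d_2})$: granting this, $\bar C$ is a nonzero zero-divisor of degree $\alpha-d_3\le(t-d_3)/2$, and Proposition \ref{prop:maxrang-inj} yields the failure of maximal rank. Suppose instead that $C=C_1x^{d_1}+C_2y^{d_2}$. Substituting into $Ax^{d_1}+By^{d_2}+C(x+y)^{d_3}=0$ and using that $x^{d_1}$ and $y^{d_2}$ are coprime, one writes $(A,B,C)$ explicitly as an $R$-linear combination of the three Koszul syzygies $(y^{d_2},-x^{d_1},0)$, $((x+y)^{d_3},0,-x^{d_1})$, $(0,(x+y)^{d_3},-y^{d_2})$, of respective degrees $d_1+d_2$, $d_1+d_3$, $d_2+d_3$.

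The hard part, and the real content of the proposition, is to force this combination to vanish, which would contradict the minimality and nonzeroness of $(A,B,C)$. A homogeneous element of degree $\alpha$ built from syzygies generated in strictly larger degrees must be zero, since each coefficient would be a form of negative degree; so it suffices that each of $d_1+d_2$, $d_1+d_3$, $d_2+d_3$ exceeds $\alpha$. As $\alpha\le T/2$, this holds as soon as all three Koszul degrees exceed $T/2$, i.e. as soon as $d_1,d_2,d_3$ satisfy the strict triangle inequalities. The hypothesis $d_3<d_1+d_2$ is exactly one of them; the other two, $d_2<d_1+d_3$ and $d_1<d_2+d_3$, are needed as well, and I would record them as standing assumptions. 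They are automatic in the situation where this proposition is applied, namely $d_3=a+b-2c$ with $1\le c<\min(a,b)$, which forces $d_3>|a-b|$. With the triangle inequalities in force the three coefficients all have negative degree, hence vanish, so $C\notin(x^{d_1},y^{d_2})$ and the argument closes. As a consistency check, $\delta=T-2\alpha\equiv T\pmod 2$ recovers Lemma \ref{lemma:delta_parity}.
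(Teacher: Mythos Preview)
Your argument follows the paper's route---translate kernel elements into syzygies and compare the minimal syzygy degree $\alpha$ to $T/2$ via $\alpha+\beta=T$---but it is more careful at the decisive point. The paper verifies only that the third entry $A_3$ of the degree-$\alpha$ syzygy is nonzero in $R=K[x,y]$, by arguing that $A_3=0$ would force $\alpha=d_1+d_2>d_3=\beta$. What the ``only if'' direction actually needs is $A_3\notin(x^{d_1},y^{d_2})$, so that $\bar A_3$ is a genuine nonzero zero-divisor in $S$. You supply exactly this: if $A_3\in(x^{d_1},y^{d_2})$ then the syzygy is an $R$-combination of the three Koszul relations, whose degrees $d_1+d_2,\,d_1+d_3,\,d_2+d_3$ all exceed $T/2\ge\alpha$ precisely when the strict triangle inequalities hold, forcing the combination to vanish.

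Your caution that the stated hypothesis $d_3<d_1+d_2$ is only one of the three needed inequalities is not overscrupulous; the proposition as literally stated is false. For $(d_1,d_2,d_3)=(1,5,2)$ one has $I=(x,y^5,(x+y)^2)=(x,y^2)$, and comparing Hilbert series gives $t^\alpha+t^\beta=t^3+t^5$, so $\delta=2$; yet $S\cong K[y]/(y^5)$ and multiplication by $y^2$ has maximal rank everywhere. Here the minimal syzygy $(-(x+y)^2,0,x)$ has third entry $x\in(x^{d_1})$, illustrating the gap. Your added hypotheses repair the statement, and as you observe they are automatic in the application to Theorem~\ref{thm:slp_delta=0}, where $d_3=d_1+d_2-2c$ with $1\le c<\min(d_1,d_2)$ forces $d_3>|d_1-d_2|$.
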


This result can be proved for general $f_1, f_2$ and $f_3$ using \cite[Theorem 2.2]{Brenner-Kaid-syzbundle} and \cite[Corollary 3.2]{Brenner-lookingout}. Below follows an easier proof for this special case. 

\begin{proof}
 We know that the syzygy module $\Syz(x^{d_1}, y^{d_2}, (x+y)^{d_3})$ is generated by two homogenous elements $(A_1,A_2,A_3)$ and $(B_1,B_2,B_3)$ of degrees $\alpha$ and $\beta$. We may assume that $\alpha \le \beta$. Provided that $A_3 \ne 0$, this can be formulated as $(x+y)^{d_3}A_3 =0$ in $S$, and $A_3$ is a homogenous element of lowest degree with this property. The degree of $A_3$ is $\alpha-d_3$. By Proposition \ref{prop:maxrang-inj} multiplication by $(x+y)^{d_3}$ has maximal rank in every degree if and only if
 \[
  \alpha-d_3 > \frac{d_1+d_2-2-d_3}{2} ~~ \textrm{or equivalently} ~~ \alpha >\frac{d_1+d_2+d_3-2}{2}.
 \]
Recall that $\alpha+\beta=d_1+d_2+d_3$. This inserted in the above inequality gives, after simplification, $\alpha > \beta-2$. Since $\alpha \le \beta$ this is exactly the property $\delta(d_1,d_2,d_3)=\beta-\alpha \le 1. $  

It remains to prove that $A_3 \ne 0$. If $A_3=0$ we would have a relation $A_1f_1+A_2f_2=0$. Since $f_1$ and $f_2$ are relatively prime, this gives $A_1=cf_2$ and $A_2=-cf_1$, for some $c \in K$. Then $\alpha=d_1+d_2$, and since $\alpha+\beta=d_1+d_2+d_3$, we get $\beta=d_3$. But $\beta \ge \alpha$ and $d_3 < d_1+d_2$ yields a contradiction. 
\end{proof}

This result combined with Proposition \ref{prop:slp-maxrang} now gives the following. 

\begin{theorem}\label{thm:slp_delta=0}
 The algebra $K[x,y]/(x^{d_1}, y^{d_2})$ has the SLP if and only if \[\delta(d_1,d_2,d_1+d_2-2c)=0 ~~ \textrm{for all} ~~ 1 \le c < \min(d_1,d_2).\] 
\end{theorem}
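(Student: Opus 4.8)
The plan is to simply glue together Proposition \ref{prop:slp-maxrang} and Proposition \ref{prop:maxrang-delta}, with one small parity observation used to upgrade the inequality $\delta \le 1$ to the equality $\delta = 0$. Writing $a = d_1$ and $b = d_2$, Proposition \ref{prop:slp-maxrang} tells us that $K[x,y]/(x^{d_1},y^{d_2})$ has the SLP if and only if multiplication by $(x+y)^{d_1+d_2-2c}$ has maximal rank in every degree for each $c$ with $1 \le c < \min(d_1,d_2)$. So it suffices to characterize, for each such $c$, when this single family of maps has maximal rank.

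First I would fix such a $c$ and set $d_3 = d_1 + d_2 - 2c$. Since $c \ge 1$ we have $d_3 \le d_1+d_2-2 < d_1+d_2$, so the hypothesis $d_3 < d_1+d_2$ of Proposition \ref{prop:maxrang-delta} is satisfied. Applying that proposition (with $S = K[x,y]/(x^{d_1},y^{d_2})$) shows that the maps $\cdot (x+y)^{d_3}$ have maximal rank in every degree precisely when $\delta(d_1,d_2,d_3) \le 1$, that is, when $\delta(d_1,d_2,d_1+d_2-2c) \le 1$.

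Next I would invoke Lemma \ref{lemma:delta_parity} to sharpen this. The three relevant degrees are $d_1$, $d_2$ and $d_1+d_2-2c$, whose sum is $2(d_1+d_2-c)$, an even number. Hence by Lemma \ref{lemma:delta_parity} the value $\delta(d_1,d_2,d_1+d_2-2c) = \Delta(x^{d_1}, y^{d_2}, (x+y)^{d_1+d_2-2c})$ is even, and a non-negative even integer is $\le 1$ if and only if it equals $0$. Thus for each fixed $c$ the maximal-rank condition is equivalent to $\delta(d_1,d_2,d_1+d_2-2c) = 0$.

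Combining these observations over all admissible $c$, the algebra has the SLP if and only if $\delta(d_1,d_2,d_1+d_2-2c) = 0$ for every $1 \le c < \min(d_1,d_2)$, which is the claim. I do not anticipate any real obstacle here: the substance lies entirely in the two cited propositions, and the only genuinely new ingredient is the parity remark that converts $\le 1$ into $= 0$. The one point worth verifying carefully is that the range of $c$ in Proposition \ref{prop:slp-maxrang} coincides with the range in the statement and that $d_3 < d_1+d_2$ holds throughout, both of which follow from $c \ge 1$.
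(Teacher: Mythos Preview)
Your proof is correct and follows essentially the same route as the paper: combine Proposition \ref{prop:slp-maxrang} with Proposition \ref{prop:maxrang-delta} to get the SLP iff $\delta(d_1,d_2,d_1+d_2-2c)\le 1$ for all admissible $c$, then use Lemma \ref{lemma:delta_parity} and the evenness of $d_1+d_2+(d_1+d_2-2c)$ to upgrade $\le 1$ to $=0$. Your explicit check that $d_3<d_1+d_2$ before applying Proposition \ref{prop:maxrang-delta} is a nice addition.
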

\begin{proof}
 It follows directly from Proposition \ref{prop:maxrang-delta} and Proposition \ref{prop:slp-maxrang} that $K[x,y]/(x^{d_1}, y^{d_2})$ has the SLP if and only if $\delta(d_1,d_2,d_1+d_2-2c)\le 1$. By Lemma \ref{lemma:delta_parity}  $\delta(d_1,d_2,d_1+d_2-2c)$ is even, so it must be 0 in this case. 
\end{proof}

The problem now is to determine for which $d_1,d_2,d_3$ we have $\delta(d_1,d_2,d_3)=0$. Let us define
 \[
  L=\{(u,v,w)\in \mathbb{Z}_+^3 ~|~ 2\max(u,v,w) \le u+v+w\}.
 \]
Also, let $L_=$ be the subset of $L$ where equality holds, and $L_<=L \setminus L_=$.
\begin{lemma}\label{lemma:delta=0_on_L_=}
 Let $(d_1,d_2,d_3) \in L_=$. Then $\delta(d_1,d_2,d_3)=0$.
\end{lemma}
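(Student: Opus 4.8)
The plan is to reduce to one distinguished case using the symmetry of the syzygy gap, and then read off the two generators of the syzygy module directly.

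First I would observe that $\delta$ is symmetric in its three arguments. Indeed, $\Delta(f_1,f_2,f_3)$ depends only on the graded module $\Syz(f_1,f_2,f_3)$, which is unchanged, up to reordering coordinates, when the $f_i$ are permuted; and the three forms $x$, $y$, $x+y$ can be carried into any permutation of themselves by an invertible linear change of variables (for instance $x\mapsto x+y,\ y\mapsto -y$ interchanges $x$ and $x+y$ while fixing $y$ up to a scalar), which preserves the degrees of the syzygy generators and hence the value of $\Delta$. Therefore I may assume $d_3=\max(d_1,d_2,d_3)$, and the hypothesis $(d_1,d_2,d_3)\in L_=$ becomes $2d_3=d_1+d_2+d_3$, that is $d_3=d_1+d_2$.

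Next I would compute the syzygy module explicitly. Since $d_3=d_1+d_2$, every monomial $x^iy^{d_3-i}$ appearing in $(x+y)^{d_3}$ satisfies $i\ge d_1$ or $d_3-i\ge d_2$, so $(x+y)^{d_3}\in(x^{d_1},y^{d_2})$; write $g_1x^{d_1}+g_2y^{d_2}=(x+y)^{d_3}$ with $g_1,g_2$ homogeneous of degrees $d_2$ and $d_1$. Thus $\{x^{d_1},y^{d_2},(x+y)^{d_3}\}$ is a non-minimal generating set of the ideal, and by the discussion following the resolution (\ref{resolution}) the module $\Syz(x^{d_1},y^{d_2},(x+y)^{d_3})$ is generated by $(y^{d_2},-x^{d_1},0)$ and $(g_1,g_2,-1)$. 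The first generator has degree $d_1+d_2=d_3$, and the second has degree $\deg(g_1)+d_1=d_2+d_1=d_3$ as well. Hence $\alpha=\beta=d_3$ and $\delta(d_1,d_2,d_3)=|\alpha-\beta|=0$.

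As a consistency check this matches $\alpha+\beta=d_1+d_2+d_3=2d_3$ from the Herzog--K\"uhl relation, and one sees directly that no nonzero syzygy can have degree below $d_3$: a syzygy of degree $\gamma<d_3$ must have vanishing third component, since that component would have degree $\gamma-d_3<0$, forcing a relation $A_1x^{d_1}+A_2y^{d_2}=0$ with $y^{d_2}\mid A_1$, whence $\gamma\ge d_1+d_2=d_3$, a contradiction. The only point requiring care is the symmetry reduction in the first step; once $d_3=d_1+d_2$ is arranged, the identification of the two generators is immediate from the non-minimal case already treated in the text.
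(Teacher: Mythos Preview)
Your proof is correct and follows essentially the same route as the paper's: reduce by a linear change of variables to the case $d_3=d_1+d_2$, observe that $(x+y)^{d_3}\in(x^{d_1},y^{d_2})$ so the generating set is non-minimal, and read off the two syzygy generators $(y^{d_2},-x^{d_1},0)$ and $(g_1,g_2,-1)$, both of degree $d_1+d_2$. The only difference is organizational---you perform the symmetry reduction first and justify it more explicitly, whereas the paper handles the distinguished case first and invokes the change of variables in a closing sentence.
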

\begin{proof}
 Suppose $d_1 \le d_2 <d_3=d_1+d_2$. We are in the situation when $x^{d_1}$, $y^{d_2}$, $(x+y)^{d_3}$ is not a minimal generating set; there are polynomials $g$ and $h$ such that $
 (x+y)^{d_1+d_2}=gx^{d_1}+hy^{d_2}$
 As we saw in the beginning of Section \ref{subsec:syz-gap}, the module $\Syz(x^{d_1},y^{d_2},(x+y)^{d_1+d_2})$ is, in this case, generated by $(g,h,-1)$ and $(y^{d_2}, -x^{d_1}, 0)$. Both these relations have degree $d_1+d_2$, which gives $\delta(d_1,d_2,d_3)=0$. 
 
 The case when $d_1$ or $d_2$ is the largest among $d_1,d_2,d_3$ follows from the above after a linear change of the variables $x$ and $y$. 
 
\end{proof}

\begin{lemma}\label{lemma:delta_ineq}
 For any two points $(c_1,c_2,c_3)$ and $(d_1,d_2,d_3)$ in $\mathbb{Z}_+^3$ it holds that
 \begin{equation}\label{eq:lemma:delta_ineq}
  | \delta(c_1,c_2,c_3) - \delta(d_1,d_2,d_3)| \le |c_1-d_1|+|c_2-d_2|+|c_3-d_3|.
 \end{equation}
Moreover, for $(d_1,d_2,d_3)\in L_<$ we can find a point $(c_1,c_2,c_3)$ such that 
\[
 \delta(c_1,c_2,c_3) = \delta(d_1,d_2,d_3) + |c_1-d_1|+|c_2-d_2|+|c_3-d_3|,
\]
and $\delta(c_1,c_2,c_3)$ decreases when any $c_i$ is replaced by $c_i \pm 1$.
\end{lemma}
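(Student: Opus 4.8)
## Proof Proposal for Lemma \ref{lemma:delta_ineq}

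The plan is to establish the two assertions separately, relying on the elementary step-size control provided by Lemma \ref{lemma:delta_plusminus1}. For the inequality \eqref{eq:lemma:delta_ineq}, the key observation is that $\delta$ changes by exactly $\pm 1$ whenever a single exponent is increased or decreased by $1$. Concretely, passing from $(d_1,d_2,d_3)$ to a neighbouring lattice point, say replacing $d_1$ by $d_1+1$, amounts to replacing $x^{d_1}$ by $(x+y)\cdot$... no: it amounts to multiplying $x^{d_1}$ by $x$, so Lemma \ref{lemma:delta_plusminus1} (applied with $f_1=x^{d_1}$, $\ell=x$, which is indeed relatively prime to $y^{d_2}$ and $(x+y)^{d_3}$) gives $|\delta(d_1+1,d_2,d_3)-\delta(d_1,d_2,d_3)|=1$, and symmetrically for decreasing an exponent or for the other two coordinates. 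First I would connect $(c_1,c_2,c_3)$ to $(d_1,d_2,d_3)$ by a lattice path inside $\mathbb{Z}_+^3$ that changes one coordinate by $\pm 1$ at each step, of total length exactly $|c_1-d_1|+|c_2-d_2|+|c_3-d_3|$. Applying the triangle inequality over the steps of this path and using that each step changes $\delta$ by exactly $1$ yields \eqref{eq:lemma:delta_ineq}. A small technical point to check is that such a monotone path can be chosen so that every intermediate point lies in $\mathbb{Z}_+^3$ (all coordinates positive); since the endpoints have positive coordinates and we move each coordinate monotonically from $d_i$ to $c_i$, this is automatic.

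For the second assertion, the goal is, given $(d_1,d_2,d_3)\in L_<$, to travel \emph{outward} in a direction along which $\delta$ strictly increases at every step, arriving at a point $(c_1,c_2,c_3)$ where the equality case of the triangle inequality holds and where $\delta$ is locally maximal in the sense that every single-coordinate decrement strictly decreases it. The natural strategy is to push toward the boundary locus $L_=$, or rather to a configuration where the polynomials $x^{c_1},y^{c_2},(x+y)^{c_3}$ become \emph{maximally non-generic}. The idea I would pursue: as long as we are strictly inside $L_<$ with $\delta>0$, there is always a coordinate direction in which $\delta$ increases (by Lemma \ref{lemma:delta_plusminus1}, each step is $\pm 1$, and one shows a decreasing direction cannot persist in all three coordinates without forcing $\delta$ down to a contradiction with the parity/Herzog–Kühl relation $\alpha+\beta=d_1+d_2+d_3$). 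I would repeatedly take a step that increases $\delta$, ensuring that the running sum of $|c_i-d_i|$ increases by $1$ in lockstep with $\delta$, so that the additivity $\delta(c_1,c_2,c_3)=\delta(d_1,d_2,d_3)+\sum_i|c_i-d_i|$ is maintained as an equality along the whole path. This process must terminate because $\delta(c_1,c_2,c_3)\le\max(c_1,c_2,c_3)$ is bounded once we fix the direction of travel.

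The main obstacle, and the crux of the argument, is proving that at the terminal point $(c_1,c_2,c_3)$ the syzygy gap $\delta$ genuinely decreases under \emph{every} one of the six single-coordinate decrements, not merely along the path we travelled. The direction we followed gives decrease in the three ``inward'' directions back toward $(d_1,d_2,d_3)$ essentially by construction; the difficulty is the other three. Here I expect to invoke Proposition \ref{thm8monsky}: the terminal configuration should be one where $f_1=x^{c_1}$ (say) is a power of a linear form with $p\nmid$ the relevant exponent after the $p^s$-reduction of Lemma \ref{lemma:delta_p^s}, and where $\delta$ decreases when the exponent is raised or lowered, so that $\delta\le\rad(f_1f_2f_3)-2$ forces $\delta$ to be small/maximal exactly at such self-consistent extremal points. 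The genuine work is to show this extremal point can always be reached while preserving the equality in the triangle inequality; I would organize this by an explicit choice of outward direction (increasing the current $\arg\max$ coordinate, or equivalently the unique dominant exponent), reducing the whole problem modulo the largest relevant power of $p$ via Lemma \ref{lemma:delta_p^s}, and checking the local decrease condition directly from the characterization in Proposition \ref{thm8monsky}.
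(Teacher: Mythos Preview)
Your argument for the inequality \eqref{eq:lemma:delta_ineq} is correct and matches the paper: chain the $\pm 1$ steps from Lemma \ref{lemma:delta_plusminus1} along a monotone lattice path.

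For the second assertion you have the right starting move---greedily follow $\delta$-increasing steps---but you have misidentified the difficulty. If the greedy process stops at $(c_1,c_2,c_3)$, then by definition no unit step from there increases $\delta$; since Lemma \ref{lemma:delta_plusminus1} says every unit step changes $\delta$ by exactly $\pm 1$, all six neighbouring steps must \emph{decrease} $\delta$. There is nothing extra to check for the ``outward'' directions, and Proposition \ref{thm8monsky} plays no role here (in the paper it enters only later, in the proof of Theorem \ref{thm:delta=0_kriterium}, after this lemma is already available). Your remark that the three ``inward'' directions decrease ``by construction'' is also off: the last step of the path only tells you that one particular backward step decreases $\delta$, not three.

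The genuine gap in your argument is termination. The sentence ``$\delta(c_1,c_2,c_3)\le\max(c_1,c_2,c_3)$ is bounded once we fix the direction of travel'' does not close it: the greedy process does not fix a direction, and even along a single ray the relevant coordinate grows with the path. The paper's key observation is the one you are missing. An increasing path from $(d_1,d_2,d_3)$ to any intermediate point $(d_1',d_2',d_3')$ has length exactly $\delta(d_1',d_2',d_3')-\delta(d_1,d_2,d_3)$, which by the first part is at most the Manhattan distance; hence the increasing path is a \emph{shortest} lattice path, and therefore every shortest path between the same endpoints must also be increasing at every step. In particular the canonical shortest path that first moves only the first coordinate, then only the second, then only the third, is increasing throughout. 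But with two coordinates held fixed, moving the remaining one eventually reaches $L_=$, where $\delta=0$ by Lemma \ref{lemma:delta=0_on_L_=}; an increasing path cannot pass through such a point. This bounds each of the three coordinate segments, hence the total length of any increasing path, and termination follows.
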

\begin{proof}
 Recall from Lemma \ref{lemma:delta_plusminus1} that $\delta(d_1,d_2,d_3)$ increases or decreases by 1 when we ''take a step'' in $\mathbb{Z}_+^3$, that is when one $d_i$ is replaced by $d_i \pm 1$. This proves (\ref{eq:lemma:delta_ineq}). 
 
 Imagine now that we start in the point $(d_1,d_2,d_3)$, and take a step in some direction, if it makes the value of $\delta$ increase. We continue in this way, as long as we can make the value of $\delta$ increase in each step. What we want to prove is that such a path can not be infinitely long. Let us fix a point $(d'_1,d'_2,d'_3)$ on our path. Any other path between $(d_1,d_2,d_3)$ and $(d'_1,d'_2,d'_3)$ must give the same value of $\delta$ at $(d'_1,d'_2,d'_3)$. It follows that a path where the value of $\delta$ increases in each step must be of minimal length, among all paths between these two points. Any other path of minimal length must also have the property that $\delta$ increases in each step. Hence we can replace our path by the path that first increases/decreases $d_1$, then $d_2$ and last $d_3$. But when $d_2$ and $d_3$ are fixed, we can only increase of decrease $d_1$ a finite number of times, before we hit $L_=$. The corresponding holds for $d_2$ and $d_3$. At $L_=$ the value of $\delta$ is zero, as we saw in Lemma \ref{lemma:delta=0_on_L_=}, so $\delta$ must have decreased. This shows that there is a bound for the length of a path that starts in a given point $(d_1,d_2,d_3)\in L_<$, and increases $\delta$ in each step. Eventually we will reach a point $(c_1,c_2,c_3)$ such that
 \[
 \delta(c_1,c_2,c_3) = \delta(d_1,d_2,d_3) + |c_1-d_1|+|c_2-d_2|+|c_3-d_3|,
\]
and $\delta(c_1,c_2,c_3)$ decreases when any $c_i$ is replaced by $c_i \pm 1$.
\end{proof}

\begin{center}
\definecolor{qqqqff}{rgb}{0,0,1}
\definecolor{xdxdff}{rgb}{0.49,0.49,1}
\definecolor{cqcqcq}{rgb}{0.75,0.75,0.75}

\begin{tikzpicture}[scale =0.46,line cap=round,line join=round,>=triangle 45,x=1.0cm,y=1.0cm]
\draw [color=cqcqcq,dash pattern=on 3pt off 3pt, xstep=1.0cm,ystep=1.0cm] (0,0) grid (22.12,18.5);
\draw[->,color=black] (0,0) -- (22.12,0);
\foreach \x in {,2,4,6,8,10,12,14,16,18,20,22}
\draw[shift={(\x,0)},color=black] (0pt,2pt) -- (0pt,-2pt);
\draw[->,color=black] (0,0) -- (0,18.5);
\foreach \y in {,2,4,6,8,10,12,14,16,18}
\draw[shift={(0,\y)},color=black] (2pt,0pt) -- (-2pt,0pt);
\clip(-2.26,-1.39) rectangle (22.12,18.5);
\draw (10,0)-- (0,9.96);
\draw [domain=10.0:25.119256353201717] plot(\x,{(-10--1*\x)/1});
\draw [domain=0.0:25.119256353201717] plot(\x,{(--9.96--1.0*\x)/1});
\draw [line width=1.1pt] (9,9)-- (9,10);
\draw [line width=1.1pt] (9,10)-- (10,10);
\draw [line width=1.1pt] (10,10)-- (11,10);
\draw [line width=1.1pt] (11,10)-- (11,11);
\draw [line width=1.1pt] (12,11)-- (11,11);
\draw [line width=1.1pt] (12,11)-- (12,12);
\draw [line width=1.1pt] (12,12)-- (13,12);
\draw [line width=1.1pt] (13,12)-- (13,13);
\draw [line width=1.1pt] (9,9)-- (9,8);
\draw [line width=1.1pt] (9,8)-- (9,7);
\draw (9,7)-- (13,7);
\draw (13,7)-- (13,13);
\begin{scriptsize}
\draw (8.8,7.72) node[anchor=north west] {$ _{+1} $};
\draw (8.8,8.72) node[anchor=north west] {$ _{+1} $};
\draw (8.8,9.72) node[anchor=north west] {$ _{+1} $};
\draw (9.1,10.1) node[anchor=north west] {$ _{+1} $};
\draw (10.1,10.09) node[anchor=north west] {$  _{+1}$};
\draw (10.8,10.72) node[anchor=north west] {$  _{+1}$};
\draw (11.1,11.1) node[anchor=north west] {$ _{+1} $};
\draw (11.8,11.72) node[anchor=north west] {$ _{+1} $};
\draw (12.1,12.1) node[anchor=north west] {$ _{+1} $};
\draw (12.8,12.72) node[anchor=north west] {$ _{+1} $};
\draw (9.0,7.1) node[anchor=north west] {$ _{+1} $};
\draw (10,7.1) node[anchor=north west] {$ _{+1} $};
\draw (6.98,14.89) node[anchor=north west] {\scalebox{2}{ $L$} };
\draw (11.1,7.1) node[anchor=north west] {$ _{+1} $};
\draw (12.1,7.1) node[anchor=north west] {$ _{+1} $};
\draw (9.5,0) node[anchor=north west] {$ d_3 $};
\draw (-1.,10.4) node[anchor=north west] {$ d_3 $};
\draw (12.8,7.9) node[anchor=north west] {$ _{+1} $};
\draw (12.8,8.9) node[anchor=north west] {$ _{+1} $};
\draw (12.8,11.9) node[anchor=north west] {$ _{_{+1}} $};
\draw (12.8,10.9) node[anchor=north west] {$ _{+1} $};
\draw (12.8,9.9) node[anchor=north west] {$ _{+1} $};
\draw (18,8.07) node[anchor=north west] {$ d_1=d_2+d_3 $};
\draw (3.5,18) node[anchor=north west] {$ d_2=d_1+d_3 $};
\draw (17,7.2) node[anchor=north west] {$ \delta=0 $};
\draw (4.5,17) node[anchor=north west] {$ \delta=0 $};
\draw (0,-1) node[anchor= west] {The set $L$, and a path with increasing $\delta$, for a fixed $d_3$.};
\fill [color=xdxdff] (10,0) circle (0.5pt);
\fill [color=xdxdff] (0,9.96) circle (0.5pt);

\fill [color=black,shift={(9,9)}] (0,0) ++(0 pt,3.0pt) -- ++(2.6pt,-4.5pt)--++(-5.2pt,0 pt) -- ++(2.6pt,4.5pt);
\fill [color=black,shift={(9,10)}] (0,0) ++(0 pt,3.0pt) -- ++(2.6pt,-4.5pt)--++(-5.2pt,0 pt) -- ++(2.6pt,4.5pt);
\fill [color=black,shift={(10,10)},rotate=270] (0,0) ++(0 pt,3.0pt) -- ++(2.6pt,-4.5pt)--++(-5.2pt,0 pt) -- ++(2.6pt,4.5pt);
\fill [color=black,shift={(11,10)},rotate=270] (0,0) ++(0 pt,3.0pt) -- ++(2.6pt,-4.5pt)--++(-5.2pt,0 pt) -- ++(2.6pt,4.5pt);
\fill [color=black,shift={(11,11)}] (0,0) ++(0 pt,3.0pt) -- ++(2.6pt,-4.5pt)--++(-5.2pt,0 pt) -- ++(2.6pt,4.5pt);
\fill [color=black,shift={(12,11)},rotate=270] (0,0) ++(0 pt,3.0pt) -- ++(2.6pt,-4.5pt)--++(-5.2pt,0 pt) -- ++(2.6pt,4.5pt);
\fill [color=black,shift={(12,12)}] (0,0) ++(0 pt,3.0pt) -- ++(2.6pt,-4.5pt)--++(-5.2pt,0 pt) -- ++(2.6pt,4.5pt);
\fill [color=black,shift={(13,12)},rotate=270] (0,0) ++(0 pt,3.0pt) -- ++(2.6pt,-4.5pt)--++(-5.2pt,0 pt) -- ++(2.6pt,4.5pt);
\fill [color=black,shift={(13,13)}] (0,0) ++(0 pt,3.0pt) -- ++(2.6pt,-4.5pt)--++(-5.2pt,0 pt) -- ++(2.6pt,4.5pt);
\fill [color=black,shift={(9,8)}] (0,0) ++(0 pt,3.0pt) -- ++(2.6pt,-4.5pt)--++(-5.2pt,0 pt) -- ++(2.6pt,4.5pt);
\fill [color=black] (9,7) circle (2.0pt);
\fill [color=black,shift={(13,7)},rotate=270] (0,0) ++(0 pt,3.0pt) -- ++(2.6pt,-4.5pt)--++(-5.2pt,0 pt) -- ++(2.6pt,4.5pt);
\fill [color=black,shift={(10,7)},rotate=270] (0,0) ++(0 pt,3.0pt) -- ++(2.6pt,-4.5pt)--++(-5.2pt,0 pt) -- ++(2.6pt,4.5pt);
\fill [color=black,shift={(11,7)},rotate=270] (0,0) ++(0 pt,3.0pt) -- ++(2.6pt,-4.5pt)--++(-5.2pt,0 pt) -- ++(2.6pt,4.5pt);
\fill [color=black,shift={(12,7)},rotate=270] (0,0) ++(0 pt,3.0pt) -- ++(2.6pt,-4.5pt)--++(-5.2pt,0 pt) -- ++(2.6pt,4.5pt);
\fill [color=black,shift={(13,8)}] (0,0) ++(0 pt,3.0pt) -- ++(2.6pt,-4.5pt)--++(-5.2pt,0 pt) -- ++(2.6pt,4.5pt);
\fill [color=black,shift={(13,9)}] (0,0) ++(0 pt,3.0pt) -- ++(2.6pt,-4.5pt)--++(-5.2pt,0 pt) -- ++(2.6pt,4.5pt);
\fill [color=black,shift={(13,10)}] (0,0) ++(0 pt,3.0pt) -- ++(2.6pt,-4.5pt)--++(-5.2pt,0 pt) -- ++(2.6pt,4.5pt);
\fill [color=black,shift={(13,11)}] (0,0) ++(0 pt,3.0pt) -- ++(2.6pt,-4.5pt)--++(-5.2pt,0 pt) -- ++(2.6pt,4.5pt);

\end{scriptsize}
\end{tikzpicture}
\end{center}
% 
% Define
% \[ \mathbb{Z}_{odd}^3 = \{ (u,v,w) \in \mathbb{Z}_+^3 ~|~ u+v+w ~ \textrm{is odd} \}. \]

\begin{theorem}\label{thm:delta=0_kriterium}
Let the function $\delta$ be defined over $K[x,y]$ where $K$ is a field of characteristic $p>0$. Let $d_1,d_2,d_3$ be positive integers, such that $d_1 \le d_2 \le d_3 <d_1+d_2$. Then $\delta(d_1,d_2,d_3)=0$ if and only if
\[|d_1-up^s|+|d_2-vp^s|+|d_3-wp^s| \ge p^s \]
for all integers $s,u,v,w$ such that $s \ge 0 $ and $u+v+w$ is odd.
\end{theorem}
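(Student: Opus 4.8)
The plan is to read the criterion as an exact description of when the piecewise-linear function $\delta$ is pinned to $0$ at $d=(d_1,d_2,d_3)$, and to prove the two implications by very different means: the ``only if'' direction from a cheap lower bound for $\delta$, and the ``if'' direction by an induction built on the local maxima of $\delta$, Proposition \ref{thm8monsky}, and the scaling identity. For the ``only if'' direction I would fix $s\ge 0$ and integers $u,v,w$ with $u+v+w$ odd, writing $(u_1,u_2,u_3)=(u,v,w)$, and first show $\delta(up^s,vp^s,wp^s)\ge p^s$: when $u,v,w\ge 1$ this is Lemma \ref{lemma:delta_p^s} together with the fact that $\delta(u,v,w)$ is odd, hence $\ge 1$, by Lemma \ref{lemma:delta_parity}; when one or two coordinates vanish the corresponding generator of $\Syz$ degenerates and a direct computation gives $\delta=|v-w|p^s$ or $(\text{the nonzero coordinate})\,p^s$, again at least $p^s$ since the sum is odd; and if some coordinate is negative the claimed inequality is immediate because a single term already exceeds $p^s$. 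The triangle-type inequality of Lemma \ref{lemma:delta_ineq} then gives
\[
\delta(d_1,d_2,d_3)\ \ge\ \delta(up^s,vp^s,wp^s)-\sum_i\bigl|d_i-u_ip^s\bigr|\ \ge\ p^s-\sum_i\bigl|d_i-u_ip^s\bigr|,
\]
so $\delta(d_1,d_2,d_3)=0$ forces the Manhattan distance to be $\ge p^s$ for every admissible $(s,u,v,w)$, which is exactly the criterion.

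For the converse I would prove the contrapositive in the sharp form that is stable under the induction: if $\delta(d_1,d_2,d_3)>0$ then there exist $s\ge 0$ and an odd lattice point $(u,v,w)$ with positive coordinates such that $\sum_i|d_i-u_ip^s|\le p^s-\delta(d_1,d_2,d_3)$ (in particular $<p^s$). The hypothesis $d_1\le d_2\le d_3<d_1+d_2$ places $d=(d_1,d_2,d_3)$ in $L_<$, so Lemma \ref{lemma:delta_ineq} supplies a point $c=(c_1,c_2,c_3)$ with $\delta(c)=\delta(d)+\sum_i|c_i-d_i|$ at which $\delta$ decreases in all six coordinate directions; note $c\in\mathbb{Z}_+^3$, so all $c_i\ge 1$. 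I induct on $\max_i d_i$ and split on the divisibility of the $c_i$. If some $c_i\not\equiv 0\pmod p$, then taking the matching $\ell\in\{x,y,x+y\}$ the hypotheses of Proposition \ref{thm8monsky} hold (the exponent is prime to $p$, and $\delta$ drops under $\ell^{\pm1}$ by the local-maximum property), whence $\delta(c)\le\rad\bigl(x^{c_1}y^{c_2}(x+y)^{c_3}\bigr)-2=1$; combined with $\delta(c)\ge\delta(d)\ge 1$ this forces $\delta(c)=1$ and $c=d$, and then $s=0$, $(u,v,w)=(d_1,d_2,d_3)$ is an odd point (its coordinate sum is odd by Lemma \ref{lemma:delta_parity}) at distance $0=p^0-\delta(d)$.

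In the remaining case all $c_i$ are divisible by $p$, so $c=p\,c'$ and Lemma \ref{lemma:delta_p^s} gives $\delta(c)=p\,\delta(c')$ with $\delta(c')>0$. Since $\max_i c'_i=\tfrac1p\max_i c_i<\max_i d_i$ (using the coordinate bound discussed below), the induction hypothesis yields $s'\ge 0$ and an odd point with $\sum_i|c'_i-u_ip^{s'}|\le p^{s'}-\delta(c')$; rescaling by $p$ gives $\sum_i|c_i-u_ip^{\,s'+1}|\le p^{\,s'+1}-\delta(c)$. The construction is designed so that this witness transports back to $d$ at no cost: using $\sum_i|c_i-d_i|=\delta(c)-\delta(d)$,
\[
\sum_i\bigl|d_i-u_ip^{\,s'+1}\bigr|\ \le\ \sum_i\bigl|c_i-u_ip^{\,s'+1}\bigr|+\sum_i\bigl|c_i-d_i\bigr|\ \le\ \bigl(p^{\,s'+1}-\delta(c)\bigr)+\bigl(\delta(c)-\delta(d)\bigr)\ =\ p^{\,s'+1}-\delta(d),
\]
so $(s'+1,u,v,w)$ already violates the criterion at $d$, completing the induction and with it Theorem \ref{thm:delta=0_kriterium}.

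The step I expect to be the main obstacle is the well-foundedness of this induction. The reduction $c\mapsto c/p$ is applied only after passing to the local maximum $c$, and a priori the monotone path of Lemma \ref{lemma:delta_ineq} could wander to larger coordinates, so the induction on $\max_i d_i$ is not automatic; the crux is to show that the local maximum satisfies $\max_i c_i\le \max_i d_i$. This is exactly where the geometry of $L$ must be used: $\delta$ vanishes on $L_=$ by Lemma \ref{lemma:delta=0_on_L_=} and is unimodal along each coordinate line, with the peak in a given direction attained before that coordinate overtakes the others, so increasing a coordinate along the path never pushes it past the current maximum. Granting this, division by $p$ strictly decreases $\max_i$ and the recursion terminates. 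The leftover points are routine: the degenerate $\Syz$ computations in the ``only if'' part, and checking that the witness coordinates remain positive throughout the descent.
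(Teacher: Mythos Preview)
Your ``only if'' direction is correct and matches the paper's argument, with the added (and appropriate) care about degenerate coordinates. The difference is in the ``if'' direction: you set up an induction on $\max_i d_i$, peeling off one factor of $p$ at a time, whereas the paper divides out the full $p$-power in one shot. At the local maximum $c$ the paper writes $c=p^s(u,v,w)$ with some $u_i$ coprime to $p$, argues that $(u,v,w)$ is \emph{itself} a local maximum of $\delta$ (because a one-step increase at $(u,v,w)$ would force $p^s$ consecutive $+1$ steps from $c$, contradicting local maximality), and then applies Proposition~\ref{thm8monsky} directly to $(u,v,w)$ to get $\delta(u,v,w)=1$. No induction, and no bound on the coordinates of $c$, is needed.

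Your inductive route \emph{can} be made to work, but the justification you give for its well-foundedness is wrong. The unimodality claim is false: in characteristic $3$, along the line $(t,9,15)$ one computes
\[
\delta(6,9,15)=0,\quad \delta(9,9,15)=3,\quad \delta(12,9,15)=0,\quad \delta(15,9,15)=3,\quad \delta(18,9,15)=6,
\]
so $\delta$ has two separate peaks in this coordinate direction. Worse, the bound $\max_i c_i\le\max_i d_i$ fails outright: starting from $d=(13,9,15)$ (where $\delta(d)=1$) the monotone path of Lemma~\ref{lemma:delta_ineq} climbs to the local maximum $c=(18,9,18)=3\cdot(6,3,6)$ with $\delta(c)=9$, and $\max_i c_i=18>15=\max_i d_i$. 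So the crux you identified is real, but your resolution of it is incorrect.

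The fix is a different, easy bound. From $c\in L_<$ one can walk to $L_=$ by increasing the largest coordinate, reaching $\delta=0$ after $\sum_i c_i-2\max_i c_i$ steps; hence $\delta(c)\le\sum_i c_i-2\max_i c_i$. Combined with $\sum_i c_i-\sum_i d_i\le\sum_i|c_i-d_i|=\delta(c)-\delta(d)\le\delta(c)$, this gives $2\max_i c_i\le\sum_i d_i\le 3\max_i d_i$, so $\max_i c'_i=\max_i c_i/p\le \tfrac{3}{2p}\max_i d_i<\max_i d_i$ for every prime $p$. With this estimate your induction terminates and the rest of your argument goes through. Still, the paper's one-step descent via the observation that $c/p^s$ remains a local maximum is both shorter and avoids the bookkeeping entirely.
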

\begin{proof}
 Assume first that $\delta(d_1,d_2,d_3)=0$. Let $s$ be a non-negative integer, and $u,v,w$ integers with odd sum. From Lemma \ref{lemma:delta_parity} we know that $\delta(u,v,w)$ is odd, in particular $\delta(u,v,w)\ge 1$. Lemma \ref{lemma:delta_p^s} and Lemma \ref{lemma:delta_ineq} now gives
 \begin{align*}
 |d_1-up^s|+|d_2-vp^s|+|d_3-wp^s| & \ge | \delta(up^s,vp^s,wp^s)-\delta(d_1,d_2,d_3)| \\
 &= \delta(up^s,vp^s,wp^s) = p^s \delta(u,v,w) \ge p^s. 
 \end{align*}
For the other implication, assume that 
\[|d_1-up^s|+|d_2-vp^s|+|d_3-wp^s| \ge p^s \]
for all $s \ge 0$ and integers $u,v,w$ with odd sum. By Lemma \ref{lemma:delta_ineq} there is a point $(c_1,c_2,c_3)$ such that 
\[ \delta(c_1,c_2,c_3)=\delta(d_1,d_2,d_3)+ |d_1-c_1|+|d_2-c_2|+|d_3-c_3|, \]
and $\delta(c_1,c_2,c_3)$ decreases by one if we replace any $c_i$ by $c_i \pm 1$. Write $c_1=p^su$, $c_2=p^sv$ and $c_3=p^sw$, such that (at least) one of $u$, $v$, and $w$ is not divisible by $p$. Notice that $\delta(u,v,w)$ also must decrease when $u,v$ or $w$ is increased or degreased by one. Otherwise we would have e. g. $\delta(u,v,w+1)=\delta(u,v,w)+1$, which implies $\delta(c_1,c_2,c_3+p^s)=\delta(c_1,c_2,c_3)+p^s$. This can only hold if $\delta$ increases in each step from $(c_1,c_2,c_3)$ to $(c_1,c_2,c_3+p^s)$, which is not the case. Now we can use Proposition \ref{thm8monsky} on $\delta(u,v,w)$ with $\ell=x,y,$ or $x+y$, depending on which of $u,v$ and $w$ are not divisible by $p$. Since $\rad(x^uy^v(x+y)^w)=3$ we get $\delta(u,v,w)\le 1$. Since $\delta(u,v,w)-1=\delta(u,v,w+1)\ge 0$, we must have $\delta(u,v,w)=1$. By Lemmma \ref{lemma:delta_parity} $u+v+w$ is odd, and we can use our assumption to get
\begin{align*}
\delta(d_1,d_2,d_3)&=\delta(c_1,c_2,c_3)-(|d_1-c_1|+|d_2-c_2|+|d_3-c_3|)\\
&=p^s\delta(u,v,w)-(|d_1-up^s|+|d_2-vp^s|+|d_3-wp^s|)\\
&=p^s-(|d_1-up^s|+|d_2-vp^s|+|d_3-wp^s|) \le 0.
\end{align*}
By definition $\delta(d_1,d_2,d_3) \ge 0$, so we can conclude $\delta(d_1,d_2,d_3)=0$.
\end{proof}

\begin{proof}[Proof of Theorem \ref{thm:slp_manhattan}]
 By Theorem \ref{thm:slp_delta=0}, $K[x,y]/(x^{d_1},y^{d_2})$ has the SLP if and only if
 \[
  \delta(d_1,d_2,d_1+d_2-2c)=0 \ \mbox{for all} \ 1 \le c < \min(d_1,d_2). 
 \]
With $d_3=d_1+d_2-2c$, clearly $d_1 \le d_3$, $d_2 \le d_3$ and $d_3 <d_1+d_2$, so we can use Theorem \ref{thm:delta=0_kriterium}. Substituting $d_3=d_1+d_2-2c$ into the inequality in Theorem \ref{thm:delta=0_kriterium}, gives Theorem \ref{thm:slp_manhattan}. 
\end{proof}

\end{document}